\numberwithin{equation}{section}
\newtheorem{theorem}{Theorem}[section]
\newtheorem{lemma}[theorem]{Lemma}
\newtheorem{proposition}[theorem]{Proposition}
\newtheorem{corollary}[theorem]{Corollary}
\theoremstyle{definition}
\newtheorem{remark}[theorem]{Remark}
\numberwithin{equation}{section}
\begin{document}

\title{On several irrationality problems for Ahmes series}

\author{Vjekoslav Kova\v{c}}
\address{V.K., Department of Mathematics, Faculty of Science, University of Zagreb, Bijeni\v{c}ka cesta 30, 10000 Zagreb, Croatia}
\email{vjekovac@math.hr}

\author{Terence Tao}
\address{T.T., UCLA Department of Mathematics, Los Angeles, CA 90095-1555}
\email{tao@math.ucla.edu}


\subjclass[2020]{
Primary
11J72; 
Secondary
11D68, 
40A05} 

\keywords{Rational number, Unit fraction, Series, Asymptotic growth, Paul Erd\H{o}s}

\begin{abstract}
Using basic tools of mathematical analysis and elementary probability theory we address several problems on the irrationality of series of distinct unit fractions, $\sum_k 1/a_k$. In particular, we study subseries of the Lambert series $\sum_k 1/(t^k-1)$ and two types of irrationality sequences $(a_k)$ introduced by Paul Erd\H{o}s and Ronald Graham.
Next, we address a question of Erd\H{o}s, who asked how rapidly a sequence of positive integers $(a_k)$ can grow if both series $\sum_k 1/a_k$ and $\sum_k 1/(a_k+1)$ have rational sums. Our construction of double exponentially growing sequences $(a_k)$ with this property generalizes to any number $d$ of series $\sum_k 1/(a_k+j)$, $j=0,1,2,\ldots,d-1$, and, in particular, also gives a positive answer to a question of Erd\H{o}s and Ernst Straus on the interior of the set of $d$-tuples of their sums.
Finally, we prove the existence of a sequence $(a_k)$ such that all well-defined sums $\sum_k 1/(a_k+t)$, $t\in\mathbb{Z}$, are rational numbers, giving a negative answer to a conjecture by Kenneth Stolarsky.
\end{abstract}

\maketitle

\tableofcontents


\section{Introduction}
A series of unit fractions
\begin{equation}\label{eq:Ahmessum}
\sum_{k=1}^{\infty} \frac{1}{a_k} \quad\text{for some positive integers } a_1<a_2<a_3<\cdots
\end{equation}
was named an \emph{Ahmes series}\footnote{Ahmes was an ancient Egyptian scribe who (re)wrote the Rhind Mathematical Papyrus around 1550 B.C. The document contains, among other things, tables for converting numerous fractions $p/q$ into sums of several distinct unit fractions $1/n$. It is unknown to us why Erd\H{o}s and Straus preferred the term ``Ahmes series,'' but a possible reason could be avoiding finitary connotations of the commonly used notions of ``unit fractions'' and ``Egyptian fractions'' \cite{Gra13,BE22}. Another reason could be paying homage to the earliest historically recorded writer of a mathematical text.} by Erd\H{o}s and Straus \cite{ES64}, but the term has since been seldom used and always in relation with rationality/irrationality problems \cite{TY02,HT04}.
A folklore result is that
\begin{equation}\label{eq:suffices1}
\lim_{k\to\infty}a_k^{1/2^k}=\infty
\end{equation}
is a sufficient condition guaranteeing that the sum \eqref{eq:Ahmessum} is an irrational number \cite{ES64,E75}.
(Also see \cite[Theorem 1]{E75} for a slightly stronger claim.)
Conversely, the shifted Sylvester sequence \cite[A129871]{OEIS},
\begin{equation}\label{eq:Sylvester}
s_1=2, \quad s_{k+1} = s_k^2 - s_k + 1 \text{ for } k\geqslant 1,
\end{equation}
has asymptotics $s_k \approx c_0^{2^k}$ for a particular constant
\begin{equation}\label{eq:constVardi}
c_0=1.2640847\ldots
\end{equation}
(see \cite[p.~109]{GKP89}, \cite{Var91}) and the sum of its reciprocals equals $1$. By shifting Sylvester's sequence further, one then immediately obtains sequences satisfying $a_k \approx C^{2^k}$ for arbitrarily large constants $C$, the reciprocals of which still sum to a rational number. We conclude that the irrationality condition \eqref{eq:suffices1} is sharp; this observation is borrowed from \cite[p.~2]{E75}.

Numerous further questions arise, relating the (ir)rationality of the Ahmes series \eqref{eq:Ahmessum} with the growth and/or particular structure of the corresponding sequence $(a_n)$.
Some of them were addressed in the existing literature, a sample of which is \cite{ES64,San84,Bad87,Duv01,TY02,HT04}.
Here we attempt several open problems posed in Erd\H{o}s and Graham's 1980 book on problems in combinatorial number theory \cite{EG80} and in the proceedings \cite{Erd88} of the \emph{Symposium on Transcendental Number Theory} held in Durham in 1986. Parts of these questions also appeared on Thomas Bloom's website \emph{Erd\H{o}s problems} \cite{EP}, where they attracted our attention.

The main purpose of this manuscript is to show how far one can get with purely elementary techniques and constructions. Namely, quite a lot can be said by only recalling that the rational numbers form a countable and dense subset of $\mathbb{R}$, so that they are simultaneously easy and difficult to avoid.
Another purpose of this paper is to provide a convenient reference for future works on this type of irrationality problems, which will then, quite likely, require more involved techniques from number theory.

\subsection{Notation}
We write $a_n\approx b_n$ if real sequences $(a_n)$ and $(b_n)$ are \emph{asymptotically equal}, i.e., $\lim_{n\to\infty} a_n/b_n=1$.
As usually, $\mathbb{N}$ is the set of positive integers $1,2,3,\ldots$.
The indicator function of a set $S$ is written as $\mathbbm{1}_S$, while the cardinality of a finite set $S$ is simply denoted as $|S|$.
The logarithm to base $b$ is written as $\log_b$, but our canonical choice will always be the \emph{binary logarithm} $\log_2$.
We write $\lfloor x\rfloor$ for the \emph{floor} of $x\in\mathbb{R}$, i.e., the greatest integer not exceeding $x$.
The \emph{$\ell^\infty$-norm} of a real tuple $x=(x_i)_i$ is defined as $|x|_{\infty} := \max_i |x_i|$.
For two subsets $A$ and $B$ of the same abelian group, we write $A+B$ for their \emph{sumset} $\{x+y : x\in A, y\in B\}$.
Similarly, when $A$ is a subset of a real vector space and $\lambda\in\mathbb{R}$, then $\lambda A$ stands for $\{\lambda x : x\in A\}$.
Finally, the sum over the empty range of indices is always understood to be the neutral element in the ambient group.


\section{Statements of the results}

\subsection{One-dimensional results}

\subsubsection{General observations on subseries sums}
Completely generally, the Ahmes series \eqref{eq:Ahmessum} can be rewritten as a subseries
\begin{equation}\label{eq:subseries}
\sum_{n\in A} \frac{1}{n}
\end{equation}
of the harmonic series $\sum_n 1/n$ determined by some infinite set $A\subseteq\mathbb{N}$ and equivalently also as
\[ \sum_{n=1}^{\infty} \frac{\epsilon_n}{n} \]
for some $(\epsilon_n)_{n=1}^{\infty}\in\{0,1\}^{\mathbb{N}}$ such that infinitely many $\epsilon_n$ are equal to $1$.
Interesting questions can be asked about those subseries sums when $A$ is further restricted to a particular subset of $\mathbb{N}$.

A general observation is that the number \eqref{eq:subseries} is almost surely irrational in a relatively strong probabilistic sense: if we randomize $A$ on some infinite set $B\subseteq\mathbb{N}$, this almost surely guarantees irrationality, no matter how sparse $B$ is.

\begin{proposition}\label{prop:randomized}
Let $A, B \subseteq \mathbb{N}$ be such that $B$ is infinite and $\sum_{n\in A\cup B} 1/n < \infty$. Let $B'$ be a random subset of $B$, i.e., the events $\{n\in B'\}$ are independent with probability $1/2$ for each $n \in B$. Let $A' := A \Delta B'$ be the symmetric difference of $A$ and $B'$. Then $\sum_{n\in A'} 1/n$ is an irrational number with probability $1$.
\end{proposition}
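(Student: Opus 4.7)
The plan is to reduce the problem to an atomless-distribution statement about a random Bernoulli series and then invoke countability of $\mathbb{Q}$. First, I would unpack $A' = A \Delta B'$: for $n \notin B$ the indicator $\mathbbm{1}_{A'}(n)$ is deterministic, while for $n \in B$ it equals $\mathbbm{1}_A(n) \oplus \mathbbm{1}_{B'}(n)$, which is Bernoulli$(1/2)$, and these variables are independent across $n \in B$. Hence
\[
S := \sum_{n \in A'} \frac{1}{n} \;=\; c \,+\, \sum_{n \in B} \frac{\xi_n}{n}, \qquad c := \sum_{n \in A \setminus B} \frac{1}{n},
\]
for a deterministic constant $c$ and i.i.d.\ Bernoulli$(1/2)$ variables $(\xi_n)_{n \in B}$; the hypothesis $\sum_{n \in A \cup B} 1/n < \infty$ makes both sides absolutely convergent almost surely. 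Since $\mathbb{Q}$ is countable, it suffices to prove $\mathbb{P}(S = r) = 0$ for every fixed $r \in \R$.

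To establish this, I would exploit the freedom in choosing where to apply the randomness. Using that $B$ is infinite and $1/n \to 0$ along $B$, a greedy selection yields a subsequence $c_1 < c_2 < \cdots$ of $B$ with $c_{k+1} > 2 c_k$ for every $k$. A routine geometric-series comparison then gives $1/c_k > \sum_{j > k} 1/c_j$, which in turn forces all $2^N$ partial subsums $\sum_{k \in J} 1/c_k$, $J \subseteq \{1,\ldots,N\}$, to be pairwise distinct. Writing the random part as $X + Y$ with $X := \sum_{k \geq 1} \xi_{c_k}/c_k$ and $Y$ independent of $X$, and further splitting $X = X_N + X'_N$ with $X_N := \sum_{k=1}^{N} \xi_{c_k}/c_k$ independent of $X'_N$, the distinctness above makes $X_N$ uniform on its $2^N$ attained values. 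Conditioning on $Y$ and $X'_N$ then yields $\mathbb{P}(S = r) \leq \sup_y \mathbb{P}(X_N = y) \leq 2^{-N}$ for every $N$, and sending $N \to \infty$ gives $\mathbb{P}(S = r) = 0$, as desired.

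There is no serious obstacle in this argument. The only mildly delicate ingredient is the greedy extraction of $(c_k)$, which uses nothing beyond the infinitude of $B$; the rest is a direct atomicity bound based on independence and distinctness of finite subsums, with absolute convergence used merely to ensure that the relevant series are well defined.
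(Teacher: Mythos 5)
Your proof is correct and follows essentially the same route as the paper's: both extract a super-geometric subsequence of $B$ (with each term exceeding twice the previous one) so that the finitely many subsums along it are pairwise distinct, then use independence to condition away the remaining randomness and bound the probability of hitting any fixed real by $2^{-N}$. The paper phrases this via product measures, Tonelli--Fubini, and the Kakeya-type uniqueness of infinite subsums, while you truncate to $N$ terms and use the uniform distribution on $2^N$ values, but the underlying mechanism and the key decomposition are identical.
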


To be more precise, we identify subsets $B'$ of $B$ with points in the infinite Cartesian product $\{0,1\}^B$. This is, in turn, the sample space on which the probability measure $\mathbb{P}_B$ is defined as the product of infinitely many copies of the symmetric Bernoulli measure on $\{0,1\}$, i.e., the unbiased coin tossing measure given by
\[ \mathbb{P}_1(\{0\})=1/2=\mathbb{P}_1(\{1\}). \]
Proposition \ref{prop:randomized} then claims
\[ \mathbb{P}_B\Big(\Big\{B'\in\{0,1\}^B : \sum_{n\in A \Delta B'}\frac{1}{n}\not\in\mathbb{Q}\Big\}\Big) = 1. \]
More details can be found in Section \ref{sec:proofsgeneral}.

\smallskip
The collection of the sets $A$ for which \eqref{eq:subseries} is  an irrational number is also prevailing in a topological sense.

\begin{proposition}\label{prop:nowheredense}
Let $B\in\mathbb{N}$ be an infinite set such that $\sum_{n\in B} 1/n <\infty$. The set of subsets $A$ of $B$ for which \eqref{eq:subseries} is rational is of the first category (i.e., a countable union of nowhere dense sets) in the Cantor space $\{0,1\}^{B}$ of all subsets of $B$, with the usual product topology.
\end{proposition}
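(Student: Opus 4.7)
The plan is to exploit the countability of $\mathbb{Q}$ by writing the set of $A\subseteq B$ with $\sum_{n\in A} 1/n \in \mathbb{Q}$ as the countable union $\bigcup_{q\in\mathbb{Q}} F_q$, where
\[ F_q := \Bigl\{A\subseteq B : \sum_{n\in A}\frac{1}{n}=q\Bigr\}. \]
Since first category sets are closed under countable unions, it suffices to show that each $F_q$ is nowhere dense in the Cantor space $\{0,1\}^B$.

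First I would verify that the evaluation map $\Phi\colon \{0,1\}^B\to\R$, $\Phi(A):=\sum_{n\in A}1/n$, is continuous. The hypothesis $\sum_{n\in B}1/n<\infty$ means that for every $\varepsilon>0$ one can choose a finite $F\subseteq B$ with $\sum_{n\in B\setminus F}1/n<\varepsilon$; any two subsets $A_1,A_2$ agreeing on $F$ (which is exactly a basic neighborhood condition in the product topology) then satisfy $|\Phi(A_1)-\Phi(A_2)|<\varepsilon$. Consequently $F_q=\Phi^{-1}(\{q\})$ is closed, so its being nowhere dense reduces to its having empty interior.

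Next I would show that no basic open set is contained in $F_q$. A basic open set is of the form $U_{F,S}:=\{A\subseteq B : A\cap F=S\}$ for a finite $F\subseteq B$ and $S\subseteq F$. Since $B$ is infinite, choose any $m\in B\setminus F$; then the finite subsets $A_1:=S$ and $A_2:=S\cup\{m\}$ both lie in $U_{F,S}$, yet $\Phi(A_2)-\Phi(A_1)=1/m\neq 0$. Hence at most one of them can equal $q$, so $U_{F,S}\not\subseteq F_q$, which gives empty interior. Combined with closedness from the previous paragraph, each $F_q$ is nowhere dense.

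There is no substantive obstacle here: the argument is a routine Baire-category exercise. The three required ingredients—countability of $\mathbb{Q}$, continuity of $\Phi$ (granted by absolute convergence of $\sum_{n\in B}1/n$), and the freedom to toggle membership at a single element of $B\setminus F$ within any basic neighborhood—essentially write the proof themselves, in close analogy with the usual demonstration that the set of rationals is meager in $\R$.
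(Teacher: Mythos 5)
Your proof is correct and follows the same route as the paper: decompose into $\bigcup_{q\in\mathbb{Q}}\Phi^{-1}(\{q\})$, establish continuity of $\Phi$ via the tail-sum estimate, deduce closedness, and show each preimage has empty interior by toggling a single element outside the fixing finite set. The only cosmetic difference is that you make the empty-interior step fully explicit (exhibiting $A_1=S$ and $A_2=S\cup\{m\}$) where the paper just asserts that the sum is not determined by $A\cap B_0$.
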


From Proposition \ref{prop:nowheredense} and the Baire category theorem applied to the compact Hausdorff space $\{0,1\}^B$ it follows that the set
\[ \Big\{ A\in\{0,1\}^B \,:\, \sum_{n\in A} \frac{1}{n} \in \mathbb{R}\setminus\mathbb{Q} \Big\} \]
is dense in $\{0,1\}^B$, even though this is also easy to verify directly.

These simple propositions will be shown in Section \ref{sec:proofsgeneral}.


\subsubsection{Lambert subseries}
In general, a \emph{Lambert series} is a series of functions in a variable $t$ that takes form
\[ \sum_{n=1}^{\infty} \frac{\epsilon_n}{t^n-1} \]
for a sequence of complex coefficients $(\epsilon_n)_{n=1}^{\infty}$.
We are only concerned with the cases for which $\epsilon_n\in\{0,1\}$, i.e., in subseries
\begin{equation}\label{eq:LambertS2}
\sum_{n\in A} \frac{1}{t^n-1},
\end{equation}
for an infinite $A\subseteq\mathbb{N}$, of the full series
\begin{equation}\label{eq:Lambert}
\sum_{n=1}^{\infty} \frac{1}{t^n-1}.
\end{equation}
Chowla \cite{Cho47} conjectured and Erd\H{o}s \cite{Erd48} proved that \eqref{eq:Lambert} is an irrational number for every integer $t\geqslant 2$. More generally, Borwein \cite{Bor91,Bor92} solved an open problem of Erd\H{o}s by showing that
\[ \sum_{n=1}^{\infty} \frac{1}{t^n+q} \]
is irrational whenever $t\geqslant 2$ is an integer and $q$ is a rational number different from $0$ and any of $-t^n$; an alternative proof appeared in \cite{AZ98}.

Erd\H{o}s mentioned \cite[p.~222]{E68}:
\begin{quote}
\emph{In fact I know no example of an infinite sequence [of positive integers] $n_1<n_2<\cdots$ and [an integer] $t\geqslant 2$ for which
\begin{equation}\label{eq:LambertS}
\sum_{i=1}^{\infty} \frac{1}{t^{n_i}-1}
\end{equation}
is rational, though it seems likely that this can happen.}\footnote{There is an obvious typo in \cite[Formula (2)]{E68}, the series \eqref{eq:LambertS} being mistakenly written as $\sum_{t=1}^{\infty}1/(t^n-1)$, but it is clear what was meant from the rest of the text.}
\end{quote}
Erd\H{o}s and Graham repeated the particular case $t=2$ of this speculation in \cite[p.~62]{EG80}:
\begin{quote}
\emph{Perhaps $\sum_{k=1}^{\infty} 1/(2^{n_k}-1)$ is irrational for any [positive integers] $n_1<n_2<\cdots$.}
\end{quote}
Erd\H{o}s also mentioned this special case in \cite[p.~105]{Erd88} and it also appeared as \cite[Problem \#257]{EP}.

For every fixed integer $t\geqslant2$ the sums \eqref{eq:LambertS2} obtained as $A$ varies over all subsets of $\mathbb{N}$ are mutually distinct and they form a Cantor set (in the topological sense). An easy verification of this claim is given in Remark \ref{rem:LambertCantor} in Section \ref{sec:Lambert}.
Its immediate consequence is that there can be only at most countably many counterexamples in which \eqref{eq:LambertS} is rational.

We do not know how to solve the original question implied by Erd\H{o}s. In fact, it seems difficult, as the subseries $\sum_{n\in A} 1/(2^n-1)$ already incorporate various delicate series for special cases of the set $A$.
For example, if $A=\mathbb{P}$ is the set of all primes, then the series becomes
\begin{equation}\label{eq:omegaseries}
\sum_{p\in\mathbb{P}} \frac{1}{2^p-1} = \sum_{p\in\mathbb{P}} \sum_{k=1}^\infty \frac{1}{2^{kp}} = \sum_{n=1}^\infty \frac{\omega(n)}{2^n},
\end{equation}
where $\omega(n)$ denotes the number of distinct prime factors of $n$. It is still open if the last series has a rational sum; this is another old problem of Erd\H{o}s \cite[Problem \#69]{EP}.
Recently, Pratt \cite{Pra24} gave a conditional proof of the irrationality of \eqref{eq:omegaseries} assuming a certain uniform version of the Hardy--Littlewood prime tuples conjecture.

We can still give a negative result when several series \eqref{eq:LambertS} are merged in the following theorem.

\begin{theorem}\label{thm:mergedLambert}
Suppose that $m$ is a positive integer, while $2\leqslant t_1<t_2<\cdots<t_m$ are integers such that
\begin{equation}\label{eq:merLcond}
\sum_{k=1}^{m} \frac{1}{t_k-1} > 1.
\end{equation}
Then there exist sets $A_1,A_2,\ldots,A_m\subseteq\mathbb{N}$ such that at least one of them is infinite and that
\begin{equation}\label{eq:merLsum}
\sum_{k=1}^{m} \sum_{n\in A_k} \frac{1}{t_k^n-1}
\end{equation}
is a rational number.
\end{theorem}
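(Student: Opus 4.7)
My plan is to build the sets $A_1,\dots,A_m$ by an alternating greedy construction, and to pick a rational target $q$ that the construction is forced to hit exactly after infinitely many steps.

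First, I would fix a rational target $q$ chosen so that $0 < q < \sum_{k=1}^m L_k$, where $L_k = \sum_{n \ge 1} 1/(t_k^n-1)$; note that $\sum_k L_k \ge \sum_k 1/(t_k-1) > 1$ by hypothesis, so there is ample room. I would pick $q$ outside the (countable) set of all finite sums of coins $1/(t_k^n-1)$, to rule out the trivial scenario where the algorithm terminates after finitely many addit
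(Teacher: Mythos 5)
Your sketch is heading in the right direction and is actually close in spirit to the paper's argument (which invokes a Kakeya-type criterion and then runs a ``fill the interval'' construction that is effectively greedy), but the key step that makes the whole thing work is missing, and what you do write is not quite correct.

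The central gap: you assert that because $\sum_k L_k \geq \sum_k 1/(t_k-1) > 1$, there is ``ample room'' for a rational target $q$ in $(0, \sum_k L_k)$ that a greedy construction will hit. This is not justified, and in general it is false. The set of achievable subsums of the multiset of coins $\{1/(t_k^n-1)\}$ need not be all of $[0,\sum_k L_k]$; for $m=1$ it is a \emph{Cantor set} (the paper notes this in Remark~\ref{rem:LambertCantor}), and a rational $q$ picked at random from $(0,L_1)$ will almost surely not be achievable at all, no matter how cleverly one greedily tries. For a greedy construction to converge exactly to a target, one needs that at every stage the remaining coins can cover the remaining deficit --- concretely, the Kakeya-type tail condition that each sufficiently small coin $x_n$ is dominated by the sum $\sum_{l>n} x_l$ of all strictly smaller coins. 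Verifying this tail condition is the computational heart of the proof, and it is precisely there that the hypothesis $\sum_k 1/(t_k-1) > 1$ is used in its full strength: the coins of base $t_k$ below a given threshold contribute roughly a fraction $1/(t_k-1)$ of that threshold, so the tails dominate iff $\sum_k 1/(t_k-1) > 1$. In your sketch this hypothesis only appears to show $\sum_k L_k > 1$, which is a strictly weaker fact and does not yield the tail bound. Once the tail condition is verified (for all sufficiently large coin indices), the achievable set is a finite union of nondegenerate closed intervals, and you must pick $q$ inside one of those intervals rather than merely in $(0,\sum_k L_k)$.

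A smaller but real gap: you want a \emph{rational} $q$ outside the countable set of finite subsums. Merely observing that the bad set is countable does not produce a rational outside it, since the rationals in a fixed interval are themselves countable. The paper handles this by a divisibility argument: pick $q$ whose reduced denominator is a multiple of $(t_1-1)\cdots(t_m-1)+1$, which is coprime to every $t_k^n-1$, so $q$ cannot be a finite sum of the coins. Some argument of this type (or another explicit exhibition of rationals avoiding the finite sums) is needed to complete your step.
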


Note that we are allowed to have repetition of the terms $1/(t_k^n-1)$ for different pairs $(k,n)$, even though, in those situations, the result does not deal with the Ahmes series anymore.


\subsubsection{Irrationality sequences}
In the literature one can find three possible definitions of when a sequence of positive integers
\begin{equation}\label{eq:increasingseq}
a_1<a_2<a_3<\cdots
\end{equation}
should be called an \emph{irrationality sequence}.
All three of them are making precise the claim that all ``perturbations'' of the Ahmes series $\sum_n 1/a_n$ sum up to an irrational number.

The first such definition is due to Erd\H{o}s and Straus \cite[p.~2--3]{E75}.
We say that \eqref{eq:increasingseq} is a \emph{Type 1 irrationality sequence} if
\[ \sum_{n=1}^{\infty} \frac{1}{t_n a_n} \]
is an irrational number for every sequence of positive integers $(t_n)_{n=1}^{\infty}$.
It is a folklore that a Type 1 irrationality sequence $(a_n)$ satisfies $\lim_{n\to\infty}a_n^{1/n}=\infty$; see a brief argument in \cite[p.~63]{EG80}.
Erd\H{o}s initially called the above condition \emph{property P} in \cite{E75} and showed that it is satisfied by $a_n=2^{2^n}$.
This is an essentially sharp result, since Han\v{c}l \cite[Corollary 2]{Han91} showed that every Type 1 irrationality sequence needs to satisfy
\[ \limsup_{n\to\infty} \frac{\log_2 \log_2 a_n}{n} \geqslant 1; \]
also see \cite[Theorem 2]{Han91} and \cite[Corollary 3]{HSS08} for slightly stronger results in both directions.

\smallskip
In \cite[p.~63]{EG80} Erd\H{o}s and Graham gave the other two concurrent definitions of irrationality sequences.
Let us call a sequence \eqref{eq:increasingseq} a \emph{Type 2 irrationality sequence} if
\[ \sum_{n=1}^{\infty}\frac{1}{b_n} \]
is an irrational number for every sequence of positive integers $(b_n)_{n=1}^{\infty}$ such that $b_n\approx a_n$.
Erd\H{o}s and Graham wrote \cite[p.~63]{EG80}:
\begin{quote}
\emph{With this definition, we do not even know if $a_n=2^{2^n}$ is an irrationality sequence. Probably an irrationality sequence of this type must also satisfy $a_n^{1/n}\to\infty$.}
\end{quote}
Also see \cite[Problem \#263]{EP}.
Erd\H{o}s added \cite[p.~105]{Erd88}:
\begin{quote}
\emph{The trouble with this definition is that we do not know a [single] non-trivial irrationality sequence (\ldots).}
\end{quote}

We can prove the following general result, which handles sequences with less than double exponential growth that do not oscillate too much in size. It also just barely fails to address $a_n=2^{2^n}$.

\begin{theorem}\label{thm:type2}
Suppose that $(a_n)_{n=1}^{\infty}$ is a strictly increasing sequence of positive integers such that $\sum_n 1/a_n$ converges and
\begin{equation}\label{eq:t2cond}
\lim_{n\to\infty} \frac{a_{n+1}}{a_n^2} = 0.
\end{equation}
Then $(a_n)_{n=1}^{\infty}$ is not a Type 2 irrationality sequence.
\end{theorem}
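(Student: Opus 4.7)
The plan is to construct a sequence $(b_n)_{n=1}^{\infty}$ of positive integers with $b_n\approx a_n$ whose reciprocals sum to a prescribed rational number, thereby showing that $(a_n)$ is not a Type 2 irrationality sequence. Let $\alpha:=\sum_{k=1}^{\infty}1/a_k$, which is finite by hypothesis. Since $\mathbb{Q}$ is dense in $\mathbb{R}$, I may choose a rational $q$ with $|q-\alpha|$ as small as desired; after fixing a sufficiently large index $N$ (so that the asymptotic regime of \eqref{eq:t2cond} is in force), I would take $|q-\alpha|\le 1/a_N^2$ and set $b_n:=a_n$ for $n\le N$. The values $b_{n+1}$ for $n\ge N$ will then be constructed inductively so that the partial sums track $q$ very closely.

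The key quantity to monitor is the error
\[
e_n \;:=\; \Big(q-\sum_{k=1}^{n}\tfrac{1}{b_k}\Big)\;-\;\sum_{k=n+1}^{\infty}\tfrac{1}{a_k},
\]
which measures the gap between the partial sum $\sum_{k\le n}1/b_k$ and its ``ideal'' value $q-\sum_{k>n}1/a_k$. Given $b_1,\ldots,b_n$, I would compute the positive real $b^*_{n+1}=a_{n+1}/(1+e_n a_{n+1})$, characterized by $1/b^*_{n+1}=1/a_{n+1}+e_n$; this is the unique real number that would force the updated error to vanish. Setting $b_{n+1}$ to be the nearest integer to $b^*_{n+1}$ then gives
\[
e_{n+1} \;=\; \frac{1}{b^*_{n+1}}-\frac{1}{b_{n+1}} \;=\; \frac{b_{n+1}-b^*_{n+1}}{b^*_{n+1}\,b_{n+1}},\qquad |e_{n+1}|\le \frac{1}{2\,b^*_{n+1}\,b_{n+1}}.
\]

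What remains is a bookkeeping of orders of magnitude, powered entirely by \eqref{eq:t2cond}. Working inductively with the invariant $|e_n|\le 2/a_n^2$ (true at stage $N$ by the choice of $q$), I would verify: (i) $|e_n|\,a_{n+1}\le 2a_{n+1}/a_n^2\to 0$, so $b^*_{n+1}$ is well-defined, positive, and comparable to $a_{n+1}$; (ii) consequently $b^*_{n+1}\,b_{n+1}\gtrsim a_{n+1}^2$, preserving the invariant at stage $n+1$; (iii) $|b_{n+1}-a_{n+1}|\le\tfrac12+|b^*_{n+1}-a_{n+1}|$ is of order $a_{n+1}^2/a_n^2$, so $|b_{n+1}-a_{n+1}|/a_{n+1}\to 0$, giving $b_n\approx a_n$. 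Passing to the limit, $|e_n|\to 0$ and $\sum_{k>n}1/a_k\to 0$ together give $\sum_{k=1}^{n}1/b_k\to q$, hence $\sum_n 1/b_n=q\in\mathbb{Q}$.

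The main obstacle is controlling the propagation of rounding errors: the rounding at stage $n$ introduces an error of size $\sim 1/a_n^2$ into $e_n$, which must be small compared to the natural granularity $1/a_{n+1}$ of the next step, both for the induction to continue and for $b_{n+1}/a_{n+1}\to 1$ to hold. This is exactly the content of \eqref{eq:t2cond}, which both explains the specific form of the hypothesis and clarifies why the argument narrowly misses the borderline case $a_n=2^{2^n}$, for which $a_{n+1}/a_n^2$ equals $1$ identically.
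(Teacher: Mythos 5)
Your proof is correct, and I checked the quantitative details: with $|e_n|\le 2/a_n^2$ and \eqref{eq:t2cond} giving $|e_n|a_{n+1}\to 0$, one gets $b^*_{n+1}=a_{n+1}(1+o(1))$, hence $b_{n+1}=a_{n+1}(1+o(1))$ and $b^*_{n+1}b_{n+1}\ge a_{n+1}^2/2$ for large $n$, so the invariant propagates and $b_n\approx a_n$. The one-line observation that $b_{n+1}-a_{n+1}$ is $O(a_{n+1}^2/a_n^2)+O(1)$, divided by $a_{n+1}$, tends to zero, is exactly the place where \eqref{eq:t2cond} bites, as you say.

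Your route is different from the paper's in packaging, though the underlying idea — a greedy correction of partial sums toward a fixed rational target — is the same. The paper first proves a reusable Lemma (their Lemma~\ref{lm:1D}) stating that if one is allowed to pick $x_n$ from integer intervals $J_n$ that are long enough relative to the gaps, then the set $\{\sum_n 1/x_n\}$ fills a whole closed interval; Theorem~\ref{thm:type2} is then obtained by choosing $J_n=\{a_n-c_n,\ldots,a_n\}$ for a cleverly chosen $c_n\approx a_n^{3/2}/a_{n-1}$ and verifying the lemma's covering condition. That detour buys them a second application of the same lemma to Theorem~\ref{thm:type3}, and it produces a whole interval of attainable sums rather than a single rational. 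Your argument collapses the two steps: you do not exhibit an interval of sums, only the one rational $q$ you aimed at, and you round $b^*_{n+1}$ to the nearest integer rather than truncating to one side. This makes your proof shorter and more self-contained for this specific theorem, at the cost of not producing a lemma that can be recycled. One tiny imprecision worth fixing in a final write-up: in your closing paragraph the ``natural granularity'' of the next step should be read as the requirement $|e_n|\ll 1/a_{n+1}$ needed for $b^*_{n+1}$ to stay positive and comparable to $a_{n+1}$ (the spacing of reciprocals near $a_{n+1}$ is of size $1/a_{n+1}^2$, not $1/a_{n+1}$); either reading leads to the same condition $a_{n+1}/a_n^2\to 0$, but the sentence as written conflates the two scales.
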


In particular, every sequence $(a_n)_{n=1}^{\infty}$ satisfying $a_n\approx 2^{(2-\varepsilon)^n}$ for some $0<\varepsilon<1$ is not a Type 2 irrationality sequence.
In the positive direction, a sequence satisfying
\[ \liminf_{n\to\infty} \frac{a_{n+1}}{a_n^{2+\varepsilon}}>0 \]
for some $\varepsilon>0$ fulfills \eqref{eq:suffices1} and so it is a Type 2 irrationality sequence.
Also note that the sequence defined by $a_n = \lfloor c_0^{2^n} \rfloor$ for the constant \eqref{eq:constVardi} associated with Sylvester's sequence \eqref{eq:Sylvester} is clearly not a Type 2 irrationality sequence, as is seen by taking $b_n=s_n$.

\smallskip
Finally, we say that a sequence \eqref{eq:increasingseq} is a \emph{Type 3 irrationality sequence} if
\[ \sum_{n=1}^{\infty}\frac{1}{a_n + b_n} \]
is an irrational number for every bounded sequence of integers $(b_n)_{n=1}^{\infty}$ such that $b_n \neq 0$ and $a_n + b_n \neq 0$ for every index $n\in\mathbb{N}$.\footnote{The assumption $b_n\neq 0$ was not explicitly stated in \cite{EG80} or \cite{Erd88}. We included it in the definition to make meaningful the question on $a_n=2^n$ cited below. For most results of this paper this assumption makes no difference; see Remark \ref{rem:type3}. Alternatively, one could assume that $b_n>0$ and all of our proofs would remain the same.} Then Erd\H{o}s and Graham wrote \cite[p.~63]{EG80}:
\begin{quote}
\emph{In this case, $2^{2^n}$ is an irrationality sequence although we do not know about $2^n$ or $n!$.}
\end{quote}
Erd\H{o}s repeated the problem a few years later \cite[p.~105]{Erd88}, also adding:
\begin{quote}
\emph{Is there an irrationality sequence $a_n$ of this type which increases exponentially? It is not hard to show that it cannot increase slower than exponentially.}
\end{quote}
The question about $2^n$ and $n!$ recently also appeared on the website \emph{Erd\H{o}s problems} \cite[Problem \#264]{EP}.

We can prove the following.

\begin{theorem}\label{thm:type3}
Suppose that $(a_n)_{n=1}^{\infty}$ is a strictly increasing sequence of positive integers such that $\sum_n 1/a_n$ converges and
\begin{equation}\label{eq:t3cond}
\liminf_{n\to\infty} \biggl( a_n^2 \sum_{k=n+1}^{\infty} \frac{1}{a_k^2} \biggr) > 0.
\end{equation}
Then $(a_n)_{n=1}^{\infty}$ is not a Type 3 irrationality sequence.
\end{theorem}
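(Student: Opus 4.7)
The plan is to exploit \eqref{eq:t3cond} and a greedy construction to produce, for a suitable large constant $B\in\N$, bounded perturbations $b_n\in\{1,\ldots,B\}$ with $\sum_n 1/(a_n+b_n)$ equal to a prescribed rational number; since each $b_n\geq 1$ is nonzero and $a_n+b_n\geq 2$, this disproves the Type~3 irrationality property. First I would extract from \eqref{eq:t3cond} a constant $c>0$ and an index $N\in\N$ such that $a_n^2\sum_{k>n}1/a_k^2\geq c$ for all $n\geq N$, fix an integer $B\geq 1+4/c$, and enlarge $N$ so that $a_N\geq B$. Choose any $b_n\in\{1,\ldots,B\}$ for $n<N$, set $P:=\sum_{n<N}1/(a_n+b_n)\in\mathbb{Q}$, and pick a rational $r$ from the nondegenerate closed interval $I_N:=\bigl[\sum_{k\geq N}1/(a_k+B),\,\sum_{k\geq N}1/(a_k+1)\bigr]$.

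The heart of the argument is to construct $(b_n)_{n\geq N}$ recursively while maintaining the invariant $R_n\in I_n$, where $R_N:=r$, $R_{n+1}:=R_n-1/(a_n+b_n)$, and
\[ I_n:=\Bigl[\sum_{k\geq n}\frac{1}{a_k+B},\ \sum_{k\geq n}\frac{1}{a_k+1}\Bigr]. \]
Given $R_n\in I_n$, preserving the invariant amounts to selecting $b_n\in\{1,\ldots,B\}$ so that $1/(a_n+b_n)$ lies in the \emph{target interval}
\[ T_n := \Bigl[R_n - \sum_{k>n}\frac{1}{a_k+1},\ R_n - \sum_{k>n}\frac{1}{a_k+B}\Bigr]. \]
Its length $L_n=\sum_{k>n}(B-1)/((a_k+1)(a_k+B))$ satisfies
\[ L_n \;\geq\; \frac{B-1}{4}\sum_{k>n}\frac{1}{a_k^2} \;\geq\; \frac{(B-1)c}{4a_n^2} \;\geq\; \frac{1}{a_n^2}, \]
using $a_k\geq a_N\geq B$, the hypothesis \eqref{eq:t3cond}, and the choice of $B$ in turn. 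The candidate values $\{1/(a_n+b):b=1,\ldots,B\}$ are $B$ points in $[1/(a_n+B),1/(a_n+1)]$ with consecutive gaps at most $1/((a_n+1)(a_n+2))<1/a_n^2\leq L_n$, and the invariant $R_n\in I_n$ forces $T_n\cap[1/(a_n+B),1/(a_n+1)]\neq\emptyset$. A short case analysis—whether $T_n$ covers the left endpoint $1/(a_n+B)$, covers the right endpoint $1/(a_n+1)$, or sits strictly between them—then exhibits a valid $b_n$, the last case using that $L_n$ exceeds the maximal gap.

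Once the invariant is maintained for all $n\geq N$, the length $|I_n|$, being a tail of the convergent series $\sum_k 1/a_k$, tends to zero, forcing $R_n\to 0$ and hence $\sum_{n\geq N}1/(a_n+b_n)=r$. Combining the head and tail gives $\sum_n 1/(a_n+b_n)=P+r\in\mathbb{Q}$, so $(a_n)$ fails to be Type~3. The main technical obstacle will be the case analysis in the inductive step; hypothesis \eqref{eq:t3cond} is used precisely at the comparison $L_n\geq 1/a_n^2$, and the remaining ingredients—the choice of $B$, the convergence $|I_n|\to 0$, and the sandwiching $R_n\to 0$—are routine.
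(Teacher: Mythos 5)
Your proposal is correct and follows essentially the same approach as the paper. The paper abstracts the greedy interval-covering argument into a standalone Lemma~\ref{lm:1D} (also used to prove Theorem~\ref{thm:type2}), applies it to the integer intervals $J_n=\{a_n+1,\dots,a_n+4C+1\}$, and verifies condition~\eqref{eq:lJ1Dassume} by the same computation you perform with $L_n$; your argument simply inlines the proof of that lemma with the intervals $\{a_n+1,\dots,a_n+B\}$ and $B\ge 1+4/c$ playing the role of $4C+1$. The invariant $R_n\in I_n$ together with $|I_n|\to 0$ is precisely the inductive construction inside the paper's proof of Lemma~\ref{lm:1D}, and the estimate $L_n\geq(B-1)c/(4a_n^2)\geq 1/a_n^2>\text{(max gap)}$ matches the paper's chain $a_n^2\sum_{k>n}4C/((a_k+1)(a_k+4C+1))\geq a_n^2\sum_{k>n}C/a_k^2\geq 1$, so the two arguments are the same modulo factoring out the lemma.
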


Theorem \ref{thm:type3} gives a negative answer to the question of Erd\H{o}s and Graham about $a_n=2^n$, and the proof also generalizes immediately to every $a_n$ with a precisely exponential growth. In fact, we have the following immediate consequence.

\begin{corollary}\label{cor:type3}
If a strictly increasing sequence of positive integers  $(a_n)_{n=1}^{\infty}$ satisfies
\[ \limsup_{n\to\infty} \frac{a_{n+1}}{a_n} < \infty, \]
then it cannot be a Type 3 irrationality sequence.
\end{corollary}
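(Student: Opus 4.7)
The plan is to derive the corollary directly from Theorem \ref{thm:type3} by verifying its hypothesis \eqref{eq:t3cond}. First I would dispose of a trivial boundary case: if $\sum_n 1/a_n$ diverges, then for any bounded integer sequence $(b_n)$ one has $\sum_n 1/(a_n+b_n)=\infty$ as well (since $a_n\to\infty$ makes $a_n+b_n$ and $a_n$ comparable for all sufficiently large $n$), so the perturbed series is not a number at all, let alone an irrational one, and $(a_n)$ fails to be a Type 3 irrationality sequence for trivial reasons.

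Henceforth assume $\sum_n 1/a_n<\infty$. Set $C:=\limsup_{n\to\infty} a_{n+1}/a_n$, which is finite by hypothesis and satisfies $C\geq 1$ because $(a_n)$ is a strictly increasing integer sequence, so that $a_{n+1}/a_n>1$. Fix any $C'>C$; in particular $C'>1$. For all $n\geq N_0$ sufficiently large one has $a_{n+1}\leq C' a_n$, which iterates to $a_k\leq (C')^{k-n}a_n$ for every $k\geq n\geq N_0$. Consequently
\[
a_n^2 \sum_{k=n+1}^{\infty} \frac{1}{a_k^2}
\;\geq\; \sum_{k=n+1}^{\infty} (C')^{-2(k-n)}
\;=\; \frac{1}{(C')^2-1} \;>\; 0
\]
for every $n\geq N_0$, verifying \eqref{eq:t3cond}. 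Theorem \ref{thm:type3} then immediately yields that $(a_n)$ is not a Type 3 irrationality sequence.

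There is no substantial obstacle: the argument is essentially a one-line geometric-series estimate once Theorem \ref{thm:type3} is in hand. The only point requiring a moment's attention is ensuring $C'>1$ strictly, so that the comparison series $\sum_j (C')^{-2j}$ converges; this is automatic from the observation $C\geq 1$.
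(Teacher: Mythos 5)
Your proof is correct and follows exactly the route the paper intends when it calls Corollary~\ref{cor:type3} an ``immediate consequence'' of Theorem~\ref{thm:type3}: verify the $\liminf$ condition~\eqref{eq:t3cond} via a geometric-series lower bound derived from $a_k\leq (C')^{k-n}a_n$. Your preliminary remark that the divergent case is trivially not Type~3 is a sensible completeness check, and the observation that $C\geq 1$ (hence $C'>1$) needed for convergence of the comparison series is exactly the one small point worth pausing on.
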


In particular, a sequence $(a_n)_{n=1}^{\infty}$ satisfying $a_n\approx \theta^n$ for some $\theta\in(1,\infty)$ is not a Type 3 irrationality sequence.
That way Corollary \ref{cor:type3} generalizes a result by Han\v{c}l and Tijdeman \cite[Theorem 4.1]{HT04}, who constructed, for a given $\theta\in(1,\infty)\setminus\mathbb{Q}$, a sequence of positive integers \eqref{eq:increasingseq} such that $\lim_{n\to\infty}a_{n+1}/a_n=\theta$ and that $\sum_{n=1}^{\infty}1/a_n$ is a rational number.
Namely, we know by Corollary \ref{cor:type3} that $a_n=\lfloor\theta^n\rfloor$ (appropriately modified for small $n$ to become strictly increasing) is not a Type 3 irrationality sequence. Thus, there exists a bounded integer sequence $(b_n)_{n=1}^{\infty}$ such that $\sum_{n=1}^{\infty}1/(a_n+b_n)\in\mathbb{Q}$, but we still have $\lim_{n\to\infty}(a_{n+1}+b_{n+1})/(a_n+b_n)=\theta$.

In the positive direction we will give a probabilistic proof of the existence of Type 3 irrationality sequences that grow only a bit faster than exponentially.

\begin{theorem}\label{thm:type3faster}
If $F\colon\mathbb{N}\to(0,\infty)$ is a function such that
\begin{equation}\label{eq:type3fa0}
\lim_{n\to\infty} \frac{F(n+1)}{F(n)}=\infty,
\end{equation}
then there exists a Type 3 irrationality sequence $(a_n)_{n=1}^{\infty}$ such that $a_n \approx F(n)$.
\end{theorem}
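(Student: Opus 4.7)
The plan is a probabilistic construction. Let $r_n := F(n+1)/F(n)$, which by hypothesis tends to infinity, and set $L_n := \max\{\lfloor F(n)/r_n^{1/2}\rfloor,\,\lfloor F(n)^{1/2}\rfloor\}$; elementary manipulations give $L_n\to\infty$ and $L_n = o(F(n))$. Draw each $a_n$ independently and uniformly from the $L_n$ consecutive integers in $[\lceil F(n)\rceil,\lceil F(n)\rceil+L_n)$. Then $a_n\approx F(n)$ surely, and $(a_n)$ is eventually strictly increasing because $L_n\ll F(n+1)-F(n)$ (possibly after adjusting finitely many initial terms by hand). It suffices to show that almost surely no bounded integer sequence $(b_n)$ with $b_n\neq 0$ gives $\sum_n 1/(a_n+b_n)\in\Q$, since any realization outside the almost-sure bad event is then a Type 3 irrationality sequence.

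Fix $(M,q)\in\N\times\Q$; since there are only countably many such pairs, it is enough to prove $P(E_{M,q})=0$ for
\[ E_{M,q} := \Big\{\exists\,(b_n)_{n\geq 1},\ |b_n|\leq M,\ b_n\neq 0,\ \sum_{n=1}^\infty \tfrac{1}{a_n+b_n}=q\Big\}. \]
The super-exponential growth of $F$ (a consequence of $r_n\to\infty$) yields $\sum_{k>n}1/(a_k-M)\leq 4/F(n+1)$ for all large $n$. Call a tuple $(b_1,\ldots,b_n)\in(\{-M,\ldots,M\}\setminus\{0\})^n$ \emph{alive at level $n$} if $|q-S_n|\leq 4/F(n+1)$, where $S_n:=\sum_{k=1}^n 1/(a_k+b_k)$, and let $N_n$ denote the number of alive tuples. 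Any witness $(b_n)$ for $E_{M,q}$ yields truncations that are alive at every level, so $E_{M,q}\subseteq\{N_n\geq 1\text{ for all }n\}$.

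The bound $|q-S_{n-1}|\leq|q-S_n|+1/(a_n+b_n)\leq 4/F(n+1)+2/F(n)\leq 4/F(n)$ (valid once $r_n\geq 2$) shows that alive at level $n$ implies alive at level $n-1$, so the alive tuples form a tree. For each alive tuple at level $n-1$ and each $b_n\in\{-M,\ldots,M\}\setminus\{0\}$, being alive at level $n$ requires $1/(a_n+b_n)$ to land in an interval of length $8/F(n+1)$, equivalently $a_n$ to land in an interval of length $O(F(n)^2/F(n+1))=O(F(n)/r_n)$. The conditional probability is thus at most $(8F(n)/r_n+1)/L_n$, and summing over the $2M$ choices of $b_n$ gives
\[ E[N_n\mid a_1,\ldots,a_{n-1}] \leq \alpha_n\, N_{n-1},\qquad \alpha_n := \frac{2M\bigl(8F(n)/r_n+1\bigr)}{L_n}. \]
A short case check (splitting $r_n\leq F(n)$ from $r_n>F(n)$, where the two components of $L_n$ take turns dominating) shows that $\alpha_n\to 0$ for every fixed $M$. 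Iterating from a level $n_0(M)$ beyond which $\alpha_n\leq 1/2$, and using the crude bound $N_{n_0}\leq(2M)^{n_0}$, one obtains $E[N_n]\to 0$. By Markov's inequality, $P(N_n\geq 1)\leq E[N_n]\to 0$, hence $P(E_{M,q})=0$, and a countable union over $(M,q)\in\N\times\Q$ finishes the argument.

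The main obstacle is the uncountability of the family of bounded perturbations $(b_n)$: a naive union bound over all $(2M)^n$ tuples $(b_1,\ldots,b_n)$ would demand $r_n$ outgrow every exponential, which the hypothesis does not guarantee. The rescuing insight is the tree structure among alive tuples, which replaces the prohibitive $(2M)^n$ factor by the product of the vanishing factors $\alpha_n$. A secondary technical point is the two-branch choice of $L_n$, engineered so that $\alpha_n\to 0$ whether $F$ grows moderately or very rapidly.
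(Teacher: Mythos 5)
Your proof is correct, and it takes a genuinely different route from the paper's. The paper first proves a uniqueness lemma (Lemma~\ref{lm:type3fa}): with a carefully calibrated window $c_n$ (growing, but $o(F(n))$ and $o((F(n)/F(n-1))^2)$), the tail sums $\sum_{n\geq m}1/(\lfloor F(n)\rfloor+d_n)$ are mutually distinct over all integer sequences $(d_n)$ in that window, via a Kakeya-type separation estimate. For a fixed pair $(C,q)$ this pins down a single template $(d_n)$, so the bad event reduces to a single cylinder event $\bigcap_n\{|a_n-\lfloor F(n)\rfloor-d_n|\leq C\}$ of probability $\prod_n(2C+1)/c_n=0$. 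You instead avoid any uniqueness statement and run a first-moment branching argument: organize the compatible perturbation prefixes into an ``alive'' tree, show that the expected branching factor $\alpha_n$ tends to $0$ (the two-component choice of $L_n$ is doing the same job the paper's constraints on $c_n$ do, ensuring the per-step probability $\lesssim F(n)/(r_nL_n)$ vanishes in both the moderate- and fast-growth regimes), and conclude $\mathbb{E}[N_n]\to 0$ by iterating the conditional bound. The paper's argument is a bit slicker---once uniqueness is in hand the probability computation is one line---while yours is more hands-on and sidesteps the separation estimate at the cost of some extra bookkeeping with the alive tree. Minor quibbles that do not affect correctness: the constant $8$ in $8F(n)/r_n$ for the preimage interval under $x\mapsto 1/x$ is really some absolute constant closer to $32$ once one accounts for $a_n+b_n$ ranging over an interval of width $L_n+2M$ rather than sitting exactly at $F(n)$; and the inclusion $E_{M,q}\subseteq\{N_n\geq 1\}$ and the tree property hold only for $n$ past some threshold depending on $M$, which is what your iteration from $n_0(M)$ implicitly uses.
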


In particular, there is a Type 3 irrationality sequence $(a_n)_{n=1}^{\infty}$ with asymptotics, say, $a_n \approx 2^{n \log_2 \log_2 \log_2 n}$.
In fact we will show a stronger result in Proposition \ref{prop:type3faster} in Section \ref{sec:type3construction}: in a certain sense, almost every sequence with a prescribed super-exponential growth \eqref{eq:type3fa0} will be a Type 3 irrationality sequence.

The variant of the above question of Erd\H{o}s and Graham for $a_n=n!$ is probably more difficult.
Namely, Theorem \ref{thm:type3faster} gives a Type 3 irrationality sequence that is asymptotically equal to $n!$, so the mere growth of the factorials cannot be enough to give the negative answer (as it did with $2^n$).
In fact, the more precise Proposition \ref{prop:type3faster} below yields a Type 3 irrationality sequence $(a_n)$ such that, say, $|a_n-n!| \leqslant \log_2 \log_2 n$.
In the positive direction, it is already an open problem whether
\[ \sum_{n=2}^{\infty} \frac{1}{n!-1} \]
is irrational; see \cite[p.~2]{E75} or \cite[Problem \#68]{EP}.


\subsection{Higher-dimensional results}

Elementary techniques of this paper can also solve more difficult problems when applied carefully in higher dimensions.

\subsubsection{Simultaneous rationality}
Erd\H{o}s came up with the following problem on two Ahmes series, which has been posed on several occasions in the 1980s \cite[p.~64]{EG80}, \cite[p.~334]{Erd85}, \cite[p.~104]{Erd88}, and recently also as \cite[Problem \#265]{EP}. We choose the formulation stated by Erd\H{o}s in \cite{Erd88}:
\begin{quote}
\emph{Once I asked: Assume that $\sum \frac{1}{n_k}$ and $\sum \frac{1}{n_k-1}$ are both rational. How fast can $n_k$ tend to infinity? I was (and am) sure that $n_k^{1/k}\to\infty$ is possible but $n_k^{1/2^k}$ must tend to $1$. Unfortunately almost nothing is known. David Cantor observed that
\begin{equation}\label{eq:DCantorex}
\sum_{k=3}^{\infty} \frac{1}{\binom{k}{2}} \quad\text{and}\quad \sum_{k=3}^{\infty} \frac{1}{\binom{k}{2}-1}
\end{equation}
are both rational and we do not know any sequence with this property which tends to infinity faster than polynomially.}
\end{quote}
In \cite[p.~334]{Erd85} Erd\H{o}s specifically mentioned the exponential growth as already being an interesting open problem:
\begin{quote}
\emph{(\ldots) and we could never decide if $n_k$ can increase exponentially or even faster.}
\end{quote}
On a different occasion \cite[p.~64]{EG80} a weaker property was also mentioned:
\begin{quote}
\emph{If $1$ is replaced by a larger constant then higher degree polynomials can be used. For example, if $p(x)=x^3+6x^2+5x$ then both $\sum_{n\geqslant 2}\frac{1}{p(n)}$ and $\sum_{n\geqslant 2}\frac{1}{p(n)-12}$ are rational (since both $p(n)$ and $p(n)-12$ completely split over the integers). Similar examples are known using polynomials with degrees as large as $10$ (see \cite{HW54}).}\footnote{There is a harmless typo in \cite{EG80}, the two series being $\sum_{n\geqslant 1}1/p(n)$ and $\sum_{n\geqslant 1}1/(p(n)+8)$, which do not factor as claimed. We deciphered the correct meaning as above.}
\end{quote}
Also note that studying rationality of the two series $\sum_k 1/n_k$ and $\sum_k 1/(n_k-d)$, for a fixed positive integer $d$, is indeed a simpler problem. Namely, it is solved by multiplying by $d$ any sequence $(n_k)$ that makes both $\sum_k 1/n_k$ and $\sum_k 1/(n_k-1)$ rational.

Our goal is to show that even a doubly exponential growth of $(n_k)$ is possible, i.e., one can have $\lim_{k\to\infty} n_k^{1/\beta^k} = \infty$ for some $\beta>1$. In particular, this confirms that $\lim_{k\to\infty} n_k^{1/k} = \infty$ can happen, just as Erd\H{o}s suspected. In principle, this also gives a definite answer on the largest possible \emph{type} of growth of $(n_k)$, since anything faster than doubly exponential is prohibited by condition \eqref{eq:suffices1} and by considering only a single series $\sum_k 1/n_k$. However, it remains unclear what the optimal exponent of the doubly exponential growth is. In particular, our result falls short of answering the part of the question on whether one can go beyond $\lim_{k\to\infty} n_k^{1/2^k} = 1$.

Moreover, we can even prove a generalization to several ``consecutive'' Ahmes series and our approach relies only on a topological property of the subseries sums.

\begin{theorem}\label{thm:higherD}
For every positive integer $d$ there exists a number $\beta>1$ such that the subset of $\mathbb{R}^d$ defined as
\begin{align*}
\biggl\{ \Big( \sum_{k=1}^{\infty}\frac{1}{a_k}, \sum_{k=1}^{\infty}\frac{1}{a_k+1}, \ldots, \sum_{k=1}^{\infty}\frac{1}{a_k+d-1} \Big) \,:\ & \text{$(a_k)_{k=1}^{\infty}$ is a strictly increasing}\\[-3mm]
& \text{sequence in $\mathbb{N}$ such that } \lim_{k\to\infty} a_k^{1/\beta^k} = \infty \biggr\}
\end{align*}
has a non-empty interior.
\end{theorem}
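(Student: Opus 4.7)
The plan is to produce, for every target $T$ in a small open ball in $\R^d$, a strictly increasing sequence $(a_k)_{k=1}^{\infty}$ of positive integers of doubly exponential growth such that $T = \sum_k v(a_k)$, where $v(a) := (1/(a+j))_{j=0}^{d-1} \in \R^d$. The sequence is built iteratively, in blocks of $d$ consecutive terms, each block carving out a $d$-dimensional portion of the remaining residual. The resulting open ball of reachable $T$'s will witness the claimed non-empty interior.

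The algebraic core is a Vandermonde non-degeneracy. Expanding
\[
v(a) = \sum_{\ell \geq 0} (-1)^\ell \frac{e_\ell}{a^{\ell+1}}, \qquad e_\ell := (0^\ell, 1^\ell, \ldots, (d-1)^\ell) \in \R^d,
\]
and using that $e_0, \ldots, e_{d-1}$ form a Vandermonde basis of $\R^d$, one checks that the map
\[
\Phi(x_1, \ldots, x_d) := v(x_1) + \cdots + v(x_d)
\]
has non-singular Jacobian (a Cauchy-type matrix) at every configuration of distinct positive reals, and is therefore a local diffeomorphism there. In particular, near any reference configuration $(A, 2A, \ldots, dA)$ with $A$ large, the image of $\Phi$ contains a non-empty open ball in $\R^d$.

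Now fix a large $A_0$ and take $T$ in the open ball around $T^* := \Phi(A_0, 2A_0, \ldots, dA_0)$. Setting $R^{(0)} := T$, iterate as follows: at stage $m \geq 0$, choose a scale $A^{(m)}$ calibrated to the size of $R^{(m)}$; invert the Cauchy--Vandermonde Jacobian to find real $x_1, \ldots, x_d$ near $(A^{(m)}, 2A^{(m)}, \ldots, dA^{(m)})$ with $\Phi(x) = R^{(m)}$; round them to distinct positive integers $a_1^{(m)} < \cdots < a_d^{(m)}$ in $[A^{(m)}, O(A^{(m)})]$, appended in order to the sequence; and set $R^{(m+1)} := R^{(m)} - \Phi(a^{(m)})$. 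Integer rounding forces $|R^{(m+1)}|_{\infty} = O((A^{(m)})^{-c})$ for some $c = c(d) > 1$ (essentially $c = 2$, the exponent in the derivatives of $v$). Since reaching $R^{(m+1)}$ at the next stage requires $A^{(m+1)} \gtrsim |R^{(m+1)}|_\infty^{-1} \gtrsim (A^{(m)})^c$, the scales grow doubly exponentially and $a_k \gtrsim A_0^{c^{k/d}}$, whence $a_k^{1/\beta^k} \to \infty$ for any $\beta \in (1, c^{1/d})$. The monotonicity $a_d^{(m)} < a_1^{(m+1)}$ is automatic from $A^{(m+1)} \gg A^{(m)}$.

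The main obstacle is making the integer block reachability quantitative: for every residual $R^{(m)}$ of the anticipated size, one must exhibit distinct positive integers in $[A^{(m)}, O(A^{(m)})]$ whose $\Phi$-image matches $R^{(m)}$ to accuracy $O((A^{(m)})^{-c})$, a requirement that has no continuous slack to absorb. This forces explicit operator-norm bounds on the inverse of the Cauchy--Vandermonde Jacobian, whose singular values range from $\Theta(1/A^2)$ down to $\Theta(1/A^{d+1})$, so that the Jacobian is highly anisotropic. Consequently the residuals $R^{(m)}$ must always lie in a thin ellipsoidal neighborhood (not an isotropic ball) of the base-point image $\Phi(A^{(m)}, 2A^{(m)}, \ldots, dA^{(m)})$, and one has to engineer the iteration so that this anisotropic constraint is preserved at every stage---for instance by adapting the base configuration used at stage $m+1$ to whichever direction of $\R^d$ the residual $R^{(m+1)}$ happens to occupy.
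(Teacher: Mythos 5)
Your outline follows the same strategy as the paper: build the sequence in blocks of $d$ consecutive terms positioned near $(A,2A,\ldots,dA)$, exploit Vandermonde/Cauchy non-singularity of the block map $\Phi$, and iterate a greedy residual reduction with doubly exponential scales. You also correctly identify the central difficulty --- that the Jacobian of $\Phi$ at scale $A$ has singular values of wildly different orders $A^{-2},\ldots,A^{-(d+1)}$, so the reachable set in each block is a thin anisotropic region, and the residual must stay inside an anisotropic tube at every stage for the iteration to close.

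However, naming this obstacle is where the proposal stops, and that is precisely where the actual work lies. Your proposed remedy --- ``adapting the base configuration used at stage $m+1$ to whichever direction of $\R^d$ the residual $R^{(m+1)}$ happens to occupy'' --- does not address the problem: for \emph{any} configuration $(c_1A,\ldots,c_dA)$ with fixed $c_j$'s and $A\to\infty$, the Jacobian's leading singular direction is forced to be $(1,1,\ldots,1)$ (and the subsequent ones fill out the fixed Vandermonde flag), independently of the $c_j$'s, because $D\Phi=\bigl(-1/(c_iA+j)^2\bigr)_{j,i}=-A^{-2}\bigl(1/c_i^2\bigr)_{j,i}+O(A^{-3})$ is rank one to leading order. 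So you cannot rotate the anisotropic ellipsoid by moving the base configuration. What you can do --- and what the paper does --- is perform a \emph{linear change of variables once and for all}: the partial-fraction map $U$ replaces the coordinates $1/(a+j)$ by $f_i(a)=1/\prod_{j=1}^{i}(a+t_j)\approx a^{-i}$. In these coordinates the reachable set at scale $N$ is (up to bounded error) an axis-aligned box with $i$-th side $\sim M/N^{i+1}$, the quadratic correction is $\sim M^2/N^{i+2}$, and the integer-rounding error is $\sim 1/N^{i+1}$; then the nested inclusion $s_k+R_k\subseteq S_k+R_{k+1}$ (paper's Lemma on $\varepsilon_d,D_d$ and inequality \eqref{eq:hdmaincond}) becomes a coordinate-wise inequality one can actually verify, provided $M_k\approx N_k^{1/2}$ and $N_{k+1}\lesssim N_k^{(2d+2)/(2d+1)}$. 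Your proposal never establishes an \emph{upper} bound on $A^{(m+1)}$ (only a lower bound from $|R^{(m)}|_\infty$), i.e.\ it never shows the next block can actually hit the current residual in every direction simultaneously. That is the genuine gap. To close it you would need some substitute for the $U$-transformation, together with the resulting quantitative box-covering statement and the ``half-power'' slack coming from $M\approx N^{1/2}$.
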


The proof will give a concrete possible value of $\beta$, namely any number satisfying \eqref{eq:thebeta} below.
By the density of $\mathbb{Q}^d$ in $\mathbb{R}^d$ we also have the following immediate consequence.

\begin{corollary}\label{cor:higherD}
For every $d\in\mathbb{N}$ there exist $\beta\in(1,\infty)$ and a strictly increasing sequence of positive integers $(a_k)_{k=1}^{\infty}$ such that
\[ \lim_{k\to\infty} a_k^{1/\beta^k} = \infty \]
and
\[ \sum_{k=1}^{\infty} \frac{1}{a_k+j} \in \mathbb{Q} \]
for $j=0,1,\ldots,d-1$.
\end{corollary}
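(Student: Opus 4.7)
The plan is to deduce Corollary \ref{cor:higherD} directly from Theorem \ref{thm:higherD} by intersecting the interior guaranteed by that theorem with the dense set $\mathbb{Q}^d\subseteq\R^d$. The argument is essentially one sentence long, so the only thing to do is to lay out the logical steps carefully.

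First, I would apply Theorem \ref{thm:higherD} with the given $d$ to obtain a value $\beta\in(1,\infty)$ and the subset $S\subseteq\R^d$ described there, namely all tuples
\[ \Big( \textstyle\sum_{k=1}^{\infty}\frac{1}{a_k}, \sum_{k=1}^{\infty}\frac{1}{a_k+1}, \ldots, \sum_{k=1}^{\infty}\frac{1}{a_k+d-1} \Big) \]
arising from strictly increasing sequences $(a_k)_{k=1}^{\infty}$ in $\N$ with $\lim_{k\to\infty} a_k^{1/\beta^k}=\infty$. The theorem asserts that $S$ has non-empty interior, so in particular it contains a non-empty open subset $U\subseteq\R^d$.

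Next, I would invoke the density of $\mathbb{Q}^d$ in $\R^d$ to pick a rational point $(q_0,q_1,\ldots,q_{d-1})\in U\cap\mathbb{Q}^d$. By the definition of $S$, this point is realized by some strictly increasing sequence $(a_k)_{k=1}^{\infty}$ of positive integers with $\lim_{k\to\infty} a_k^{1/\beta^k}=\infty$, and then $\sum_{k=1}^{\infty}\frac{1}{a_k+j}=q_j\in\mathbb{Q}$ for each $j=0,1,\ldots,d-1$, as required. There is no real obstacle here; the entire difficulty has been pushed into Theorem \ref{thm:higherD}, and the purpose of Corollary \ref{cor:higherD} is just to record the rational instance which answers the original question of Erd\H{o}s and Straus.
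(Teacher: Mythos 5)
Your argument is correct and is exactly the paper's intended deduction: the paper labels Corollary \ref{cor:higherD} an ``immediate consequence'' of Theorem \ref{thm:higherD} via the density of $\mathbb{Q}^d$ in $\R^d$, which is precisely the step you spell out. Nothing more is needed.
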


For example, there exists a double exponentially growing sequence $(a_k)_{k=1}^{\infty}$ such that
\[ \sum_k \frac{1}{a_k}, \quad \sum_k \frac{1}{a_k+1}, \quad\text{and}\quad \sum_k \frac{1}{a_k+2} \]
sum up to rational numbers.
The main strength of our approach is that we do not attempt to give an explicit construction of one such sequence $(a_k)$, which is in a sharp contrast with slowly growing examples like \eqref{eq:DCantorex}.


\subsubsection{Non-empty interior of tuples of subsums}
The above theorem also solves another conjecture originating in the work of Erd\H{o}s and Straus.
Erd\H{o}s repeatedly placed it alongside with the aforementioned problems on the irrationality of series (\cite[\S7]{EG80}, \cite{Erd85}, \cite{Erd88}), even though it is not, strictly speaking, an irrationality question.
Namely, Erd\H{o}s reported \cite[p.~65]{EG80} that he and Straus showed that the set
\begin{align*}
\bigg\{ (x,y) = \Big( \sum_{k=1}^{\infty}\frac{1}{a_k}, \sum_{k=1}^{\infty}\frac{1}{1+a_k} \Big) \,:\ & \text{$(a_k)_{k=1}^{\infty}$ is a strictly increasing}\\[-3mm]
& \text{sequence in $\mathbb{N}$},\ \ \sum_{k=1}^{\infty}\frac{1}{a_k}<\infty \bigg\}
\end{align*}
contains a non-empty open set.
Their proof has never been published, which Erd\H{o}s explained several years later \cite[p.~105]{Erd88}:
\begin{quote}
\emph{We never published our proof since we did not work out the $r$-dimensional case.}
\end{quote}
To avoid possible confusion about what was meant to be a higher-dimensional generalization, Erd\H{o}s and Graham formulated the problem explicitly in three dimensions in their 1980 book \cite[p.~65]{EG80}:
\begin{quote}
\emph{Is the same true in three (or more) dimensions, e.g., taking all $(x,y,z)$ with
\begin{equation}\label{eq:3Dpoints}
x=\sum_k\frac{1}{a_k},\quad y=\sum_k\frac{1}{1+a_k},\quad z=\sum_k\frac{1}{2+a_k} ?
\end{equation}}
\end{quote}
Erd\H{o}s later mentioned this open question in a memorial article dedicated to Straus \cite{Erd85}, with an optimistic remark following the above two-dimensional claim \cite[p.~334]{Erd85}:
\begin{quote}
\emph{(\ldots) this no doubt generalizes for higher dimensions.}
\end{quote}
However, at the next opportunity \cite[p.~104--105]{Erd88} he posed the problem with a more reserved formulation:
\begin{quote}
\emph{Probably the analogous result holds for $r$ dimensions.}
\end{quote}

The three-dimensional case \eqref{eq:3Dpoints} of the question appeared as \cite[Problem \#268]{EP} and it was addressed recently, with the affirmative answer, by one of the present authors \cite{Kov24}. The proof in \cite{Kov24} was somewhat constructive, producing a concrete ball of radius $10^{-24}$ inside the set of points \eqref{eq:3Dpoints}.
Our Theorem \ref{thm:higherD} implies the above conjecture in all dimensions.

\begin{corollary}
For every positive integer $d$ the subset of $\mathbb{R}^d$ defined as
\[ \bigg\{ \Big(\sum_{n\in A}\frac{1}{n}, \sum_{n\in A}\frac{1}{n+1}, \ldots, \sum_{n\in A}\frac{1}{n+d-1}\Big) \,:\ A\subset\mathbb{N} \text{ infinite},\ \sum_{n\in A}\frac{1}{n}<\infty \bigg\} \]
has a non-empty interior.
\end{corollary}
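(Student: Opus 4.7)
The plan is to deduce this corollary immediately from Theorem~\ref{thm:higherD}, so the argument should be essentially a one-line reduction. Let $\beta>1$ be the value supplied by Theorem~\ref{thm:higherD}, and denote by $S_\beta\subseteq\R^d$ the set appearing there, namely the set of $d$-tuples of the form $\bigl(\sum_k 1/a_k,\sum_k 1/(a_k+1),\ldots,\sum_k 1/(a_k+d-1)\bigr)$ arising from strictly increasing sequences $(a_k)_{k=1}^{\infty}$ in $\N$ satisfying $\lim_{k\to\infty} a_k^{1/\beta^k}=\infty$. By Theorem~\ref{thm:higherD}, the set $S_\beta$ has non-empty interior. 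Write $S\subseteq\R^d$ for the set appearing in the statement of the corollary.

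The key step is to verify the inclusion $S_\beta\subseteq S$. Given any strictly increasing sequence $(a_k)_{k=1}^{\infty}\subset\N$ with $\lim_k a_k^{1/\beta^k}=\infty$, I would set $A:=\{a_k:k\in\N\}$. Then $A$ is an infinite subset of $\N$, and the growth hypothesis gives $a_k\geq c^{\beta^k}$ for every fixed $c>1$ and all sufficiently large $k$. In particular $\sum_{n\in A}1/n=\sum_{k=1}^{\infty} 1/a_k<\infty$ (in fact converging super-exponentially), so $A$ meets the requirements of the corollary. Since enumerating $A$ in increasing order recovers exactly the sequence $(a_k)$, the tuple produced from $A$ as in the corollary coincides with the tuple produced from $(a_k)$ as in Theorem~\ref{thm:higherD}; hence every point of $S_\beta$ is a point of $S$.

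Combining these two observations, $S\supseteq S_\beta$, and $S_\beta$ has non-empty interior, so $S$ does as well. There is no genuine obstacle here: all the work is carried out in Theorem~\ref{thm:higherD}, and the only thing to check in the reduction is that the double-exponential growth condition $\lim_k a_k^{1/\beta^k}=\infty$ imposed there is automatically much stronger than the mild convergence requirement $\sum_{n\in A}1/n<\infty$ imposed in the corollary, which is immediate.
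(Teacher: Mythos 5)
Your reduction is correct and matches the paper's (implicit) reasoning: the paper states this corollary as an immediate consequence of Theorem~\ref{thm:higherD} without spelling out the inclusion, which is exactly the one-line observation you supply, namely that the double-exponential growth condition forces $\sum_{n\in A}1/n<\infty$ so that every tuple counted in Theorem~\ref{thm:higherD} is also counted in the corollary.
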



\subsection{An infinite-dimensional result}
The ultimate result of this paper is a certain extension of Corollary \ref{cor:higherD} to infinitely many Ahmes series, which answers a question of Kenneth B. Stolarsky. Namely, Erd\H{o}s and Graham wrote in their 1980 problem book \cite[p.~64]{EG80}:
\begin{quote}
\emph{The following pretty conjecture is due to Stolarsky:
\[ \sum_{n=1}^{\infty}\frac{1}{a_n+t} \]
cannot be rational for every positive integer $t$.}
\end{quote}
Also see \cite[Problem \#266]{EP}.
The same problem appeared in \cite[p.~334]{Erd85} and \cite[p.~104]{Erd88}, with the exception that $t$ was also allowed to be an integer different from any of the numbers $-a_n$ for $n\in\mathbb{N}$.\footnote{In fact, the problem is stated equivalently there, in terms of the series $\sum_{n=1}^{\infty} 1/(a_n-t)$ and requiring $t\neq a_n$ for every $n$.}
We can disprove the conjecture, not only when $t$ ranges over the integers, but also, somewhat counterintuitively, when $t$ is allowed to be a rational number.

\begin{theorem}\label{thm:Stolarsky}
There exists a strictly increasing sequence of positive integers $(a_n)_{n=1}^{\infty}$ such that the series
\[ \sum_{n=1}^{\infty}\frac{1}{a_n+t} \]
converges to a rational number for every $t\in\mathbb{Q}\setminus\{-a_n:n\in\mathbb{N}\}$.
\end{theorem}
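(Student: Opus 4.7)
The plan is to construct $(a_n)$ by a stage-by-stage procedure, handling one rational shift at each stage and leveraging a strengthened version of Theorem~\ref{thm:higherD} to maintain enough flexibility to continue. First enumerate $\mathbb{Q}=\{t_1,t_2,\ldots\}$. At the end of stage $N$ I will have fixed a strictly increasing prefix $a_1<\cdots<a_{M_N}$ of positive integers, rational targets $Q_1,\ldots,Q_N$, and a ``witness'' continuation $(\tilde a_k)_{k>M_N}$ of that prefix such that the combined sequence satisfies $\sum_{k=1}^{\infty} 1/(a_k+t_j)=Q_j$ for $j=1,\ldots,N$. The final sequence is then the limit of the committed prefixes, which is well defined once $M_N\to\infty$ along the construction.

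The key technical input is a refinement of Theorem~\ref{thm:higherD} to arbitrary distinct rational shifts and arbitrary starting thresholds: for any distinct rationals $s_1,\ldots,s_N$ and any $L\in\N$, the set of tuples $\big(\sum_k 1/(a_k+s_j)\big)_{j=1}^{N}$ achievable by strictly increasing sequences $(a_k)$ of positive integers with $a_1>L$, $a_k+s_j\neq 0$ for all $k,j$, and double-exponential growth has non-empty interior in $\R^N$. This should be obtained by adapting the proof of Theorem~\ref{thm:higherD}: clearing denominators reduces us to integer shifts, and the argument there in fact admits arbitrary distinct integer shifts together with any prescribed lower bound $L$ on $a_1$, at the price of modifying only finitely many early terms.

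At the inductive step from $N$ to $N+1$ I commit the next block of witness terms by setting $a_k:=\tilde a_k$ for $M_N<k\le M_{N+1}$, choosing $M_{N+1}$ large so that the residuals $r_j:=Q_j-\sum_{k\le M_{N+1}}1/(a_k+t_j)$ for $j\le N$ are small. The refined interior result yields an open set $U\subseteq\R^{N+1}$ of achievable tail-sum tuples for shifts $t_1,\ldots,t_{N+1}$ starting above $a_{M_{N+1}}$. I then need to pick a rational $Q_{N+1}$ such that the vector $(r_1,\ldots,r_N,\,Q_{N+1}-\sum_{k\le M_{N+1}}1/(a_k+t_{N+1}))$ lies in $U$; once this is done, the refined interior result furnishes a fresh witness $(\tilde a_k)_{k>M_{N+1}}$ realizing the prescribed $(N+1)$-tuple of tail sums, completing stage $N+1$.

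The main obstacle is arranging for the specific residual vector $(r_1,\ldots,r_N)$ to lie in the projection of $U$ onto the first $N$ coordinates. That projection is open (being the image of an open set) but need not automatically contain the residual vector determined by the history. I would resolve this by strengthening the inductive hypothesis so that the witness $\tilde a$ at stage $N$ is itself chosen with extra robustness, namely that its tuple of tail sums lies in the projection of the $(N+1)$-dimensional interior that will be used for the incoming shift $t_{N+1}$; the feasibility of this stronger hypothesis rests on a scaling/cone argument showing that the interior for shifts $s_1,\ldots,s_{N+1}$ with threshold $L$ covers tuples of arbitrarily small positive coordinates as $L\to\infty$, together with a randomization step in the spirit of Proposition~\ref{prop:randomized} that lets me adjust the tail of $\tilde a$ without significantly altering its first $N$ sums. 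Finally, the exceptional set $\{-a_n\colon n\in\N\}$ is handled by skipping any $t_N$ that already equals $-a_k$ for a committed $a_k$ and by forbidding future $a_k$ from equaling $-t$ for any previously processed rational $t$; each such restriction excludes only finitely many positive integers and so does not interfere with the construction.
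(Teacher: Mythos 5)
Your stage-by-stage restart strategy correctly identifies the central obstacle --- that the residual vector $(r_1,\ldots,r_N)$ of a committed prefix must lie in the projection of the achievable set $U$ for the next shift --- but the proposed resolution does not actually close it, and this is where the argument has a genuine gap. First, the claim that ``the interior for shifts $s_1,\ldots,s_{N+1}$ with threshold $L$ covers tuples of arbitrarily small positive coordinates as $L\to\infty$'' conflates the \emph{location} and the \emph{size} of the achievable set: as $L\to\infty$ each coordinate of the achievable set shrinks like $\Theta(1/L)$, but the set itself is a thin, highly sheared body whose diameter shrinks much faster (in the transformed coordinates of Lemma~\ref{lm:quantest}, like $L^{-i-1/2}$ in coordinate $i$), so it does not cover a neighborhood of scale $1/L$ near the origin, and there is no reason the residual vector --- produced by a witness built for only $N$ shifts --- lands inside it. Second, the ``strengthened inductive hypothesis'' is circular as stated: you want the witness chosen at stage $N$ to have tail sums inside the projection of the $(N+1)$-dimensional interior, but that interior depends on the cut point $M_{N+1}$ and on a construction you have not yet run; nothing in your stage-$N$ data controls this. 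Third, a randomization in the spirit of Proposition~\ref{prop:randomized} is designed to \emph{avoid} a measure-zero set (the rationals), not to \emph{hit} a specific tiny target set, so it cannot supply the needed alignment. Finally, ``clearing denominators reduces us to integer shifts'' is not as innocent as it sounds, since different $t_j=p_j/q_j$ have different $q_j$; one would have to pass to a common denominator and then restrict the sequence to a congruence class, which changes the threshold and density assumptions.

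The paper's proof sidesteps the projection problem altogether by not restarting. It applies the partial-fraction change of variables $U$ (so that the relevant tuples become $\bigl(\sum_n f_i(a_n)\bigr)_i$ with $f_i(x)=1/\prod_{j\le i}(x+t_j)$), under which the achievable sets of Lemma~\ref{lm:quantest} are honest axis-aligned rectangular boxes nested across scales, and $U\mathbb{Q}^{\N}=\mathbb{Q}^{\N}$ so rationality is preserved. It then runs one single greedy algorithm in which the ambient dimension $d(k)$ \emph{grows slowly} with the scale index $k$: at step $k$ it picks the next $d(k)$ terms of $(a_n)$ from $[jN_k-M_k,jN_k+M_k]$, and when a new coordinate $i$ first becomes active (at $k=m(i)$) the rational target $x_i$ is \emph{chosen at that moment} to lie in the interval certified by the current error box. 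Because the coordinate is activated from inside the nested box family, the ``does the residual lie in the projection'' question never arises --- it is automatic. Making this work for infinitely many coordinates also forces the sub-double-exponential growth $N_k\approx 2^{2^{\sqrt k}}$ so that condition \eqref{eq:hdmaincond} can be satisfied simultaneously for all $i\le d(k)$ as $d(k)\to\infty$. In short, the missing ingredient in your proposal is the transformation $U$ (or some equivalent device that makes the achievable sets box-shaped and axis-aligned across scales), without which the residual-alignment step cannot be carried through.
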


Our proofs of Theorems \ref{thm:higherD} and \ref{thm:Stolarsky} share common ingredients on the approximation of points in $\mathbb{R}^d$ by appropriate $d$-dimensional Ahmes subseries. The proof of Theorem \ref{thm:higherD} in Section \ref{sec:proofhigher} will then work in a fixed ambient dimension $d$, while the proof of Theorem \ref{thm:Stolarsky} in Section \ref{sec:proofStolarsky} will, in some sense, let $d\to\infty$.


\section{Proofs of Propositions \ref{prop:randomized} and \ref{prop:nowheredense} on general observations}
\label{sec:proofsgeneral}

We begin with a generally useful remark on series with positive terms.

\begin{remark}\label{rem:Kakeya}
Consider a real sequence $(x_n)_{n=1}^{\infty}$ with properties
{\allowdisplaybreaks
\begin{subequations}
\begin{align}
& x_n > 0 \text{ for all } n\in\mathbb{N}, \label{eq:KakCond1} \\
& x_1 \geqslant x_2 \geqslant x_3 \geqslant \cdots, \label{eq:KakCond2} \\
& \sum_{n=1}^{\infty} x_n < \infty. \label{eq:KakCond3}
\end{align}
\end{subequations}
}
It is an easy exercise, which has already been known to Kakeya \cite{Kak14a,Kak14b} (also see the survey paper \cite{BFP13}), that
\begin{equation}\label{eq:achievementset}
\biggl\{ \sum_{n=1}^{\infty} \epsilon_n x_n \,:\, (\epsilon_n)_{n=1}^{\infty}\in\{0,1\}^{\mathbb{N}} \biggr\}
\end{equation}
is a finite union of closed bounded intervals if and only if
\begin{equation}\label{eq:KakCond4}
\sum_{k=n+1}^{\infty} x_k \geqslant x_n \text{ for all sufficiently large } n\in\mathbb{N},
\end{equation}
while \eqref{eq:achievementset} is a Cantor set if (but not only if)
\begin{equation}\label{eq:KakCond5}
\sum_{k=n+1}^{\infty} x_k < x_n \text{ for all } n\in\mathbb{N}.
\end{equation}
Kakeya also conjectured that these two are the only possibilities for \eqref{eq:achievementset} up to a homeomorphism.
However, this was disproved by Weinstein and Shapiro \cite{WS80}, while Guthrie, Nymann, and S\'{a}enz \cite{GN88,NS00} gave a complete topological characterization of all possibilities for the set \eqref{eq:achievementset}.
Still, the above conditions are sufficiently simple and we will use them a few times in this paper.
Note that condition \eqref{eq:KakCond4} actually enables us to write every number from the interval union \eqref{eq:achievementset}, except for the left endpoints of the intervals forming it, as an infinite subseries of $\sum_{n=1}^{\infty}x_n$, i.e., in the form $\sum_{n=1}^{\infty} \epsilon_n x_n$ with infinitely many $1$s among the coefficients $\epsilon_n\in\{0,1\}$; see \cite[Problem~131, p.~29]{PS98}.\footnote{We are grateful to Yohei Tachiya for providing this reference and correcting an omission in the proof of Theorem \ref{thm:mergedLambert}.}
Also note that \eqref{eq:KakCond5} guarantees that every real number has at most one representation as a subseries sum from \eqref{eq:achievementset}.
\end{remark}

Now we turn to the proofs of our general propositions.

\begin{proof}[Proof of Proposition \ref{prop:randomized}]
We identify the subsets of $B$ with the points of $\{0,1\}^B$.
For every $T\subseteq B$ let $\mathbb{P}_T$ denote the (product) measure defined on the measurable space on $\{0,1\}^T$ generated by the cylinder sets
\[ \big\{ C\in\{0,1\}^T \,:\, \mathbbm{1}_{C}(t_1)=\epsilon_1,\, \ldots,\, \mathbbm{1}_{C}(t_m)=\epsilon_m \big\} \]
associated with every $m\in\mathbb{N}$, every $t_1<\cdots<t_m$ from $T$, and every $\epsilon_1,\ldots,\epsilon_m\in\{0,1\}$, such that
\[ \mathbb{P}_T \big(\big\{ C \,:\, \mathbbm{1}_{C}(t_1)=\epsilon_1,\, \ldots,\, \mathbbm{1}_{C}(t_m)=\epsilon_m \big\}\big) = \Bigl(\frac{1}{2}\Bigr)^m. \]
Choose a sequence $(b_n)_{n=1}^{\infty}$ of elements of $B$ such that $b_{n+1}>2b_n$ for every $n\in\mathbb{N}$, and denote $B_0:=\{b_n : n\in\mathbb{N}\}$.
The sequence $(x_n)_{n=1}^{\infty}$, $x_n:=1/b_n$ now clearly satisfies conditions \eqref{eq:KakCond1}--\eqref{eq:KakCond3} and \eqref{eq:KakCond5}. Consequently, every real number $x$, either cannot be represented as a subsum of $\sum_{n\in B_0}1/n$, or it has precisely one such representation, $x=\sum_{n\in S}1/n$ for some $S=\{s_1<s_2<\cdots\}\subseteq B_0$. In the latter case,
\begin{align*}
& \mathbb{P}_{B_0}\Big(\Big\{C : \sum_{n\in C} \frac{1}{n} = x\Big\}\Big)
= \mathbb{P}_{B_0}(\{S\}) \\
& = \lim_{m\to\infty} \mathbb{P}_{B_0}\big(\big\{C : \mathbbm{1}_{C}(b_1)=\mathbbm{1}_{S}(b_1),\,\ldots,\,\mathbbm{1}_{C}(b_m)=\mathbbm{1}_{S}(b_m)\big\}\big)
= \lim_{m\to\infty} \Bigl(\frac{1}{2}\Bigr)^m = 0,
\end{align*}
so in either case we have
\begin{equation}\label{eq:randomisx}
\mathbb{P}_{B_0}\Big(\Big\{C\in\{0,1\}^{B_0} \,:\, \sum_{n\in C} \frac{1}{n} = x\Big\}\Big) = 0.
\end{equation}
A general set $A'$ in question is obtained as $A'=C\cup D\cup (A\setminus B)$ for unique $C\subseteq B_0$ and $D\subseteq B\setminus B_0$.
Now, for every $q\in\mathbb{Q}$, the Tonelli--Fubini theorem gives
\begin{align*}
& \mathbb{P}_B \Big(\Big\{B'\in\{0,1\}^B \,:\, \sum_{n\in A'}\frac{1}{n}=q\Big\}\Big) \\
& = (\mathbb{P}_{B_0}\times\mathbb{P}_{B\setminus B_0}) \Big(\Big\{(C,D)\in\{0,1\}^{B_0}\times\{0,1\}^{B\setminus B_0} \,:\, \sum_{n\in C}\frac{1}{n} + \sum_{n\in D}\frac{1}{n} + \sum_{n\in A\setminus B}\frac{1}{n} = q\Big\}\Big) \\
& = \int \mathbb{P}_{B_0}\Big(\Big\{C\in\{0,1\}^{B_0} \,:\, \sum_{n\in C}\frac{1}{n} = q - \sum_{n\in D}\frac{1}{n} - \sum_{n\in A\setminus B}\frac{1}{n}\Big\}\Big) \,\textup{d}\mathbb{P}_{B\setminus B_0}(D) \stackrel{\eqref{eq:randomisx}}{=} 0.
\end{align*}
Since there are only countably many rational numbers, we conclude
\[ \mathbb{P}_B\Big(\Big\{B'\in\{0,1\}^B : \sum_{n\in A'}\frac{1}{n}\in\mathbb{Q}\Big\}\Big) = 0. \]
This proves that the opposite event $\{B'\subseteq B : \sum_{n\in A'}1/n \in \mathbb{R}\setminus\mathbb{Q}\}$ has $\mathbb{P}_B$-probability $1$.
\end{proof}

\begin{proof}[Proof of Proposition \ref{prop:nowheredense}]
Note that the function
\[ \Phi\colon\{0,1\}^B \to \mathbb{R}, \quad \Phi(A) := \sum_{n\in A} \frac{1}{n} \]
is continuous.
Namely, for any $A_0\in \{0,1\}^B$ and $\varepsilon>0$ we take a finite set $B_\varepsilon\subset B$ such that $\sum_{n\in B\setminus B_\varepsilon}1/n<\varepsilon$ and observe that the open set
\[ \{A\in\{0,1\}^B \,:\, \mathbbm{1}_A(t)=\mathbbm{1}_{A_0}(t) \text{ for every } t\in B_\varepsilon \} \]
is mapped via $\Phi$ into an $\varepsilon$-neighborhood of the point $\Phi(A_0)\in\mathbb{R}$.
From the continuity of $\Phi$ it follows that the set
\[ \Big\{ A\in\{0,1\}^B \,:\, \sum_{n\in A} \frac{1}{n} = q \Big\} = \Phi^{-1}(\{q\}) \]
is closed in $\{0,1\}^B$ for each rational number $q$. Moreover, $\Phi^{-1}(\{q\})$ has empty interior, since it cannot contain a set of the form
\[ \{A\in\{0,1\}^B \,:\, \mathbbm{1}_A(t)=\epsilon_t \text{ for every } t\in B_0 \} \]
for any finite $B_0\subset B$ and a $\{0,1\}$-collection $(\epsilon_t)_{t\in B_0}$, as this would mean that the sum $\sum_{n\in A}1/n$ is completely determined by the set $A\cap B_0$, which clearly is not the case.
We finally conclude that
\[ \Big\{ A\in\{0,1\}^B \,:\, \sum_{n\in A} \frac{1}{n} \in \mathbb{Q} \Big\} = \bigcup_{q\in\mathbb{Q}} \Phi^{-1}(\{q\}) \]
is of the first category.
\end{proof}


\section{Proof of Theorem \ref{thm:mergedLambert} on Lambert subseries}
\label{sec:Lambert}

\begin{remark}\label{rem:LambertCantor}
As we have mentioned in the introduction, for a fixed $t\geqslant2$, the subsums \eqref{eq:LambertS2} are mutually different for different $A\subseteq\mathbb{N}$.
This follows from
\[ \sum_{l=n+1}^{\infty} \frac{1}{t^l-1} < \frac{1}{t^n-1} \]
for every $n\in\mathbb{N}$, which is, in turn, easily seen as
\[ \sum_{l=n+1}^{\infty} \frac{t^n-1}{t^l-1} < \sum_{l=n+1}^{\infty} \frac{t^n}{t^l} = \frac{1}{t-1} \leqslant 1. \]
This verifies condition \eqref{eq:KakCond5} and Remark \ref{rem:Kakeya} applies. The same observation proves that the subsums \eqref{eq:LambertS2} form a Cantor set.
\end{remark}

\begin{proof}[Proof of Theorem \ref{thm:mergedLambert}]
It is enough to show that the set
\begin{equation}\label{eq:bigmixedsum} 
\biggl\{  \sum_{k=1}^{m} \sum_{n\in A_k} \frac{1}{t_k^n-1} \,:\, A_1,A_2,\ldots,A_m\subseteq\mathbb{N},\ A_1\cup A_2\cup \cdots\cup A_m\text{ is infinite} \biggr\}, 
\end{equation}
composed of the infinite sums in \eqref{eq:merLsum}, contains a non-degenerate interval.
Let us order the multiset
\[ \Bigl\{ \frac{1}{t_k^n-1} \,:\, n\in\mathbb{N},\ k\in\{1,2,\ldots,m\}  \Bigr\} \]
into a decreasing sequence $(x_n)_{n=1}^{\infty}$.
In our case \eqref{eq:KakCond1}--\eqref{eq:KakCond3} are obvious by the construction. 
Thus, it remains to verify \eqref{eq:KakCond4} if we want to show that \eqref{eq:achievementset} is a finite union of bounded closed intervals and that, consequently, \eqref{eq:bigmixedsum} is a finite union of bounded half-open intervals.

Take $\varepsilon>0$ sufficiently small that condition \eqref{eq:merLcond} still gives
\begin{equation}\label{eq:Lambaux1}
(1-\varepsilon)\sum_{k=1}^{m} \frac{1}{t_k-1} > 1.
\end{equation}
Denote $N_0 = \lfloor \log_2 (1/\varepsilon) \rfloor +1$.
Fix an index $n\in\mathbb{N}$ sufficiently large that $n\geqslant N_0$ and $x_n \leqslant \min_{1\leqslant k\leqslant m} 1/(t_k^{N_0}-1)$. We can verify \eqref{eq:KakCond4} for such indices $n$.
Indeed, for each $k\in\{1,2,\ldots,m\}$ let $N_k\geqslant N_0$ be the largest positive integer such that $1/(t_k^{N_k}-1) \geqslant x_n$; then
\begin{equation}\label{eq:Lambaux2}
\frac{1}{t_k^{N_k}-1} \geqslant x_n > \frac{1}{t_k^{N_k+1}-1}
\end{equation}
for every $1\leqslant k\leqslant m$.
Since the sequence is decreasing, all of the fractions $1/(t_k^{j}-1)$ with $j\geqslant N_k+1$ are certainly enumerated by $(x_l)_{l=n+1}^{\infty}$.
Therefore,
\[ \sum_{l=n+1}^{\infty} x_l
\geqslant \sum_{k=1}^{m} \sum_{j=N_k+1}^{\infty} \frac{1}{t_k^j-1}
> \sum_{k=1}^{m} \sum_{j=N_k+1}^{\infty} \frac{1}{t_k^j} \\
= \sum_{k=1}^{m} \frac{1}{t_k^{N_k}} \frac{1}{t_k-1}. \]
Since the choice of $N_0$ and the fact $N_k\geqslant N_0$ guarantee $t_k^{N_k} \geqslant 2^{N_0} > 1/\varepsilon$, so that $t_k^{N_k} - 1 > (1-\varepsilon) t_k^{N_k}$, the last expression is greater than
\[ (1-\varepsilon) \sum_{k=1}^{m} \frac{1}{t_k^{N_k}-1} \frac{1}{t_k-1}
\stackrel{\eqref{eq:Lambaux2}}{\geqslant} x_n (1-\varepsilon) \sum_{k=1}^{m} \frac{1}{t_k-1}
\stackrel{\eqref{eq:Lambaux1}}{>} x_n, \]
as desired.
\end{proof}


\section{Proofs of Theorems \ref{thm:type2} and \ref{thm:type3} on irrationality sequences}

Both Theorem \ref{thm:type2} and Theorem \ref{thm:type3} will be consequences of the following simple lemma.

\begin{lemma}\label{lm:1D}
Let $(J_n)_{n=1}^{\infty}$ be a sequence of non-empty intervals of positive integers.
If
\[ \sum_{n=1}^{\infty} \frac{1}{\min J_n} < \infty \]
and
\begin{equation}\label{eq:lJ1Dassume}
\sum_{k=n+1}^{\infty} \frac{|J_k|-1}{(\min J_k)(\max J_k)} \geqslant \frac{1}{(\min J_n)(\min J_n+1)}
\end{equation}
for all sufficiently large indices $n$, then the set of infinite sums
\begin{equation}\label{eq:theperturbed1D}
\biggl\{ \sum_{n=1}^{\infty}\frac{1}{x_n} \,:\, x_n\in J_n \text{ for every } n\in\mathbb{N} \biggr\}
\end{equation}
has a non-empty interior in $\mathbb{R}$.
In particular, there exist numbers $x_n\in J_n$, $n\in\mathbb{N}$, such that $\sum_{n=1}^{\infty} 1/x_n \in\mathbb{Q}$.
\end{lemma}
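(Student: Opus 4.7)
The plan is to construct the series greedily while tracking the ``envelope'' of possible tail sums. Define
\[ L_n := \sum_{k=n}^{\infty} \frac{1}{\max J_k}, \qquad U_n := \sum_{k=n}^{\infty} \frac{1}{\min J_k}, \]
both finite since $\min J_k \leq \max J_k$ and $\sum 1/\min J_k < \infty$, and observe
\[ U_n - L_n = \sum_{k=n}^{\infty} \frac{|J_k|-1}{(\min J_k)(\max J_k)}. \]
Choose $N$ large enough that \eqref{eq:lJ1Dassume} holds for every $n \geq N$. The main claim to prove is that for every target $r \in [L_N, U_N]$ one can find $x_n \in J_n$, $n \geq N$, with $\sum_{n \geq N} 1/x_n = r$. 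Combined with any fixed choice $x_n \in J_n$ for $n < N$, this places a translate of $[L_N, U_N]$ inside the set \eqref{eq:theperturbed1D}; its interior is non-empty because \eqref{eq:lJ1Dassume} at $n=N$ forces $U_N - L_N \geq U_{N+1} - L_{N+1} > 0$, and the existence of rational sums then follows from density of $\mathbb{Q}$.

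To prove the main claim I would induct on $n \geq N$, maintaining the invariant $r_n := r - \sum_{k=N}^{n-1} 1/x_k \in [L_n, U_n]$, starting from $r_N = r$. Selecting $x_n \in J_n$ so that $r_{n+1} \in [L_{n+1}, U_{n+1}]$ is equivalent to placing $1/x_n$ inside the interval $I_n := [r_n - U_{n+1}, r_n - L_{n+1}]$ of length $U_{n+1} - L_{n+1}$. Writing $a := \min J_n$ and $b := \max J_n$, the admissible values $V_n = \{1/a, 1/(a+1), \ldots, 1/b\}$ have consecutive gaps $1/(j(j+1))$, the largest of which is $1/(a(a+1))$, and by hypothesis this is bounded above by the length of $I_n$.

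Feasibility then splits into three exhaustive cases according to the position of $I_n$ relative to $[1/b, 1/a]$. If $I_n \subseteq [1/b, 1/a]$, the ``length exceeds the largest gap'' estimate directly produces some $1/x_n \in V_n \cap I_n$. If the left endpoint of $I_n$ drops below $1/b$, the invariant $r_n \geq L_n$ rewrites as $1/b \leq r_n - L_{n+1}$, so $1/b \in I_n$ and we set $x_n = b$; symmetrically, if the right endpoint of $I_n$ exceeds $1/a$, the invariant $r_n \leq U_n$ gives $1/a \in I_n$ and we set $x_n = a$. In each case the invariant is preserved for $r_{n+1}$. Since $L_{n+1}, U_{n+1} \to 0$, the remainders tend to zero and the partial sums converge to $r$. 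The one step that needs genuine care is this three-case analysis: the hypothesis \eqref{eq:lJ1Dassume} is tailored so that the tail slack $U_{n+1} - L_{n+1}$ exactly matches the worst-case gap $1/(a(a+1))$ in $V_n$, and the boundary inequalities $r_n \geq L_n$ and $r_n \leq U_n$ built into the invariant are precisely what rescue the cases where $I_n$ spills outside the range $[1/b, 1/a]$.
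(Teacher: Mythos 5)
Your proof is correct and follows essentially the same route as the paper's. The paper defines the same tail intervals $I_n = [L_n, U_n]$, proves the set identity $I_n = I_{n+1} + \{1/j : j \in J_n\}$ via the same gap-versus-length comparison, and then greedily constructs $(x_k)$ so that $x \in \sum_{k<n} 1/x_k + I_n$ at every stage; your three-case invariant $r_n \in [L_n, U_n]$ is an equivalent, slightly more explicit unpacking of that covering step.
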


If the intervals $(J_n)_{n=1}^{\infty}$ are increasing and non-overlapping, i.e.,
\[ \min J_1 \leqslant \max J_1 < \min J_2 \leqslant \max J_2 < \min J_3 \leqslant \max J_3 < \cdots, \]
then every choice $x_n\in J_n$ for each $n\in\mathbb{N}$ yields one Ahmes series $\sum_n 1/x_n$.
Otherwise, repetitions of the terms in $\sum_n 1/x_n$ are possible, which will be fine for the intended applications of Lemma \ref{lm:1D}.

\begin{proof}[Proof of Lemma \ref{lm:1D}]
Let us denote
\begin{equation}\label{eq:Iintervals}
I_n := \biggl[ \sum_{k=n}^{\infty} \frac{1}{\max J_k} , \sum_{k=n}^{\infty} \frac{1}{\min J_k} \biggr] \subset (0,\infty)
\end{equation}
for every $n\in\mathbb{N}$.
We first claim that the set equalities
\begin{equation}\label{eq:newincl}
I_n = I_{n+1} + \biggl\{ \frac{1}{j} \,:\, j\in J_{n} \biggr\}
\end{equation}
hold for all sufficiently large positive integers $n$.
Namely, the right hand side of \eqref{eq:newincl} is clearly contained in the left hand side, so it remains to show the opposite inclusion.
The closed interval $I_{n+1}$ has length
\[ \sum_{k=n+1}^{\infty} \Big( \frac{1}{\min J_k} - \frac{1}{\max J_k} \Big)
= \sum_{k=n+1}^{\infty} \frac{|J_k|-1}{(\min J_k)(\max J_k)}, \]
while consecutive reciprocals $1/j$ of $j\in J_{n}$, appearing in the finite set on the right hand side of \eqref{eq:newincl}, are mutually separated by at most
\[ \frac{1}{\min J_{n}} - \frac{1}{\min J_{n}+1} = \frac{1}{(\min J_{n})(\min J_{n}+1)}. \]
Assumption \eqref{eq:lJ1Dassume} applies for large $n$ and guarantees that the translates $I_{n+1}+1/j$, $j\in J_{n}$, of $I_{n+1}$ fully cover the whole segment $I_n$.

Let $m\in\mathbb{N}$ be such that \eqref{eq:newincl} is valid for every positive integer $n\geqslant m$.
Now we claim that the set of series' tail sums
\[ \biggl\{ \sum_{k=m}^{\infty}\frac{1}{x_k} \,:\, x_k\in J_k \text{ for every } k\geqslant m \biggr\} \]
is precisely equal to the whole interval $I_m$.
Namely, fix some $x\in I_m$. Thanks to \eqref{eq:newincl} we can inductively construct a sequence of numbers $x_k\in J_k$ for $k=m,m+1,m+2,\ldots$ such that
\[ x \in \sum_{k=m}^{n-1}\frac{1}{x_k} + I_{n} \]
for every integer $n\geqslant m$. Since lengths of real line segments \eqref{eq:Iintervals} converge to $0$, we conclude
$x=\sum_{k=m}^{\infty}1/x_k$.

As a consequence of the claim that we just proved, the set \eqref{eq:theperturbed1D} contains the closed interval
\[ \sum_{n=1}^{m-1}\frac{1}{\min J_n} + I_m. \]
This interval is clearly non-degenerate, as otherwise we would have $\min J_k=\max J_k$ and $|J_k|=1$ for all sufficiently large indices $k$, which violates condition \eqref{eq:lJ1Dassume}. This completes the proof.
\end{proof}

Somewhat similar constructions for different irrationality problems have already appeared in the literature \cite{Han91,TY02,HT04}.

\begin{proof}[Proof of Theorem \ref{thm:type2}]
Let $(a_n)$ be a strictly increasing sequence of positive integers with $\sum_n 1/a_n$ converging and \eqref{eq:t2cond} holding. Our goal is to locate a sequence $b_n \approx a_n$ of positive integers such that $\sum_n 1/b_n$ is rational.

We begin by constructing a sequence of positive integers $(c_n)_{n=1}^{\infty}$ such that
\begin{equation}\label{eq:type2aux1}
\lim_{n\to\infty}\frac{c_n}{a_n} = 0,
\end{equation}
and
\begin{equation}\label{eq:type2aux2}
\lim_{n\to\infty}\frac{a_{n-1}^2 c_n}{a_n^2} = \infty.
\end{equation}
One possible choice is to set $c_1:=1$ and
\[ c_n := \Big\lfloor \frac{a_n^{3/2}}{a_{n-1}} \Big\rfloor+1 \quad\text{for } n\geqslant 2. \]
Namely, note that because of
\begin{equation}\label{eq:type2aux3}
\lim_{n\to\infty}a_n=\infty
\end{equation}
we have
\[ \lim_{n\to\infty}\frac{c_n}{a_n}
\stackrel{\eqref{eq:type2aux3}}{=} \lim_{n\to\infty} \frac{1}{a_n} \frac{a_n^{3/2}}{a_{n-1}}
= \lim_{n\to\infty}\frac{a_n^{1/2}}{a_{n-1}}
= \Big(\lim_{n\to\infty}\frac{a_{n+1}}{a_{n}^2}\Big)^{1/2} \stackrel{\eqref{eq:t2cond}}{=} 0 \]
and
\[ \liminf_{n\to\infty}\frac{a_{n-1}^2 c_n}{a_n^2}
\geqslant \liminf_{n\to\infty}\frac{a_{n-1}^2}{a_n^2} \frac{a_n^{3/2}}{a_{n-1}}
= \lim_{n\to\infty}\frac{a_{n-1}}{a_n^{1/2}}
= \Big(\lim_{n\to\infty}\frac{a_{n}^2}{a_{n+1}}\Big)^{1/2} \stackrel{\eqref{eq:t2cond}}{=} \infty. \]

Now we apply Lemma \ref{lm:1D} to the integer intervals $(J_n)_{n=1}^{\infty}$ defined as
\[ J_n := \big\{ a_n, a_n-1, a_n-2, \ldots, \max\{a_n-c_n, 1\} \big\}. \]
From \eqref{eq:type2aux1} and \eqref{eq:type2aux3} we know that
\[ \min J_n = a_n-c_n \quad\text{and}\quad |J_n|=c_n+1 \]
for every sufficiently large index $n$.
Thus, $\sum_n 1/\min J_n$ converges if and only if $\sum_n 1/(a_n-c_n)$ converges, but the last series can be compared to a convergent series $\sum_n 1/a_n$ thanks to \eqref{eq:type2aux1}.
Condition \eqref{eq:lJ1Dassume} of Lemma \ref{lm:1D} is also satisfied for sufficiently large $n$:
\begin{align*}
& (\min J_n)(\min J_n+1) \sum_{k=n+1}^{\infty} \frac{|J_k|-1}{(\min J_k)(\max J_k)} \\
& > (a_n-c_n)^2 \frac{c_{n+1}}{(a_{n+1}-c_{n+1})a_{n+1}}
= \Big(1 - \frac{c_n}{a_n}\Big)^2 \Big(1 - \frac{c_{n+1}}{a_{n+1}}\Big)^{-1} \frac{a_{n}^2 c_{n+1}}{a_{n+1}^2} .
\end{align*}
Namely, by \eqref{eq:type2aux1} and \eqref{eq:type2aux2} the last expression converges to $\infty$ as $n\to\infty$, so it becomes greater than $1$ whenever $n$ is large enough.
Lemma \ref{lm:1D} then gives a sequence of integers $(b_n)_{n=1}^{\infty}$ such that $\sum_{n=1}^{\infty} 1/b_n$ is a rational number and $a_n-c_n\leqslant b_n\leqslant a_n$ for each $n\in\mathbb{N}$, which implies $\lim_{n\to\infty} b_n/a_n=1$ by  the sandwich theorem and \eqref{eq:type2aux1} again.
\end{proof}

\begin{proof}[Proof of Theorem \ref{thm:type3}]
Let $C\in\mathbb{N}$ be large enough that
\begin{equation}\label{eq:type3aux1}
\frac{1}{a_n^2} \leqslant C \sum_{k=n+1}^{\infty} \frac{1}{a_k^2}
\end{equation}
holds for every $n\in\mathbb{N}$; it exists by condition \eqref{eq:t3cond}.
We apply Lemma \ref{lm:1D} to the integer intervals $(J_n)_{n=1}^{\infty}$ defined as
\[ J_n := \big\{ a_n+1, a_n+2, a_n+3, \ldots, a_n+4C+1 \big\}. \]
Clearly, $\sum_n 1/\min J_n$ converges since $\sum_n 1/a_n$ converges.
Moreover, condition \eqref{eq:lJ1Dassume} of the lemma is fulfilled for all indices $n$ that also satisfy
\begin{equation}\label{eq:type3aux2}
a_n\geqslant 4C+1,
\end{equation}
because:
\begin{align*}
& (\min J_n)(\min J_n+1) \sum_{k=n+1}^{\infty} \frac{|J_k|-1}{(\min J_k)(\max J_k)} \\
& > a_n^2 \sum_{k=n+1}^{\infty} \frac{4C}{(a_k+1)(a_k+4C+1)}
\stackrel{\eqref{eq:type3aux2}}{\geqslant} a_n^2 \sum_{k=n+1}^{\infty} \frac{C}{a_k^2} \stackrel{\eqref{eq:type3aux1}}{\geqslant} 1,
\end{align*}
as needed.
By Lemma \ref{lm:1D} there exists a sequence of integers $(b_n)_{n=1}^{\infty}$ in $[1,4C+1]^\mathbb{N}$ such that $\sum_{n=1}^{\infty} 1/(a_n+b_n)$ is a rational number.
\end{proof}

In the particular case $a_n=2^n$, which has been mentioned by Erd\H{o}s and Graham, the proofs of Lemma \ref{lm:1D} and Theorem \ref{thm:type3} can be specialized and merged to show that
\[ \bigg\{ \sum_{n=1}^{3}\frac{1}{2^n+1} + \sum_{n=4}^{\infty}\frac{1}{2^n+b_n} \,:\, b_n\in \{1,2,\ldots,5\} \text{ for } n\geqslant 4 \bigg\} \]
equals the whole segment
\[ [0.7488145169\ldots, 0.7644997803\ldots]. \]
Consequently, there is a sequence $(b_n)_{n=1}^{\infty}$ in the set $\{1,2,\ldots,5\}$ such that
\[ \sum_{n=1}^{\infty} \frac{1}{2^n+b_n} = 0.75 \in\mathbb{Q}. \]


\section{Proof of Theorem \ref{thm:type3faster} on the existence of irrationality sequences}
\label{sec:type3construction}

Suppose that $F\colon\mathbb{N}\to(0,\infty)$ is a function satisfying the growth condition \eqref{eq:type3fa0}.
Also, let $(c_n)_{n=1}^{\infty}$ be a sequence of non-negative integers satisfying
{\allowdisplaybreaks
\begin{subequations}
\begin{align}
& \lim_{n\to\infty} c_n = \infty, \label{eq:type3fa1} \\
& \lim_{n\to\infty} \frac{c_n}{F(n)} = 0, \label{eq:type3fa2} \\
& \lim_{n\to\infty} \frac{c_n}{(F(n)/F(n-1))^2} = 0. \label{eq:type3fa3}
\end{align}
\end{subequations}
}
Such sequences $(c_n)_{n=1}^{\infty}$ clearly exist, but we prefer to work generally and postpone one explicit choice to the end of this section.
Due to \eqref{eq:type3fa0} and \eqref{eq:type3fa1}--\eqref{eq:type3fa3} there exists a positive integer $n_0$ such that
{\allowdisplaybreaks
\begin{subequations}
\begin{align}
& F(n) \geqslant 2c_n > 0, \label{eq:type3fb4} \\
& F(n)\geqslant n, \quad F(n+1)\geqslant F(n) + c_n + 1, \label{eq:type3fa4} \\
& 48 c_{n+1} + 1 < \Bigl(\frac{F(n+1)}{F(n)}\Bigr)^2 \label{eq:type3fa9}
\end{align}
\end{subequations}
}
all hold for every integer $n\geqslant n_0$.
Note that property \eqref{eq:type3fa9} can be rewritten as
\[ \frac{1}{F(n)^2} - \frac{1}{F(n+1)^2} > \frac{48c_{n+1}}{F(n+1)^2}, \]
so it also implies, by telescoping,
\begin{equation}\label{eq:type3fa5}
\frac{1}{F(n)^2} > \sum_{k=n+1}^{\infty} \frac{48c_k}{F(k)^2}
\end{equation}
for every $n\geqslant n_0$.

\begin{proposition}\label{prop:type3faster}
Consider a sequence $(a_n)_{n=1}^{\infty}$ constructed randomly by choosing
\[ a_n \in \{1,2,3,\ldots,c_n\} + \lfloor F(n)\rfloor \]
uniformly and independently for each $n\geqslant n_0$ and also set $a_n := n$ for each $1\leqslant n<n_0$.
Then, with probability $1$, $(a_n)_{n=1}^{\infty}$ is a Type 3 irrationality sequence such that $a_n \approx F(n)$.
\end{proposition}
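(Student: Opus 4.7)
The strategy is to verify, almost surely, both the asymptotic $a_n \approx F(n)$ with strict monotonicity (which is in fact deterministic) and the Type 3 irrationality. The first is immediate: the containment $a_n \in \{\lfloor F(n)\rfloor + 1, \ldots, \lfloor F(n)\rfloor + c_n\}$ for $n \geq n_0$ together with \eqref{eq:type3fa2} gives $a_n/F(n) \to 1$, while \eqref{eq:type3fa4} yields $a_{n+1} \geq \lfloor F(n+1)\rfloor + 1 > \lfloor F(n)\rfloor + c_n \geq a_n$. For the Type 3 property, let $E_{M,q}$ denote the event that some valid sequence $(b_n)$ with $|b_n| \leq M$ satisfies $\sum_k 1/(a_k + b_k) = q$; by countable union over $(M,q) \in \N \times \mathbb{Q}$ it suffices to prove $\mathbb{P}(E_{M,q}) = 0$ for each fixed pair.

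The decisive idea is to replace the random achievement set with a \emph{deterministic} ambient Cantor set built from the possible integer ranges of $v_k := a_k + b_k$. For each $M$, pick $n_1 = n_1(M) \geq n_0$ large enough that, for every $k \geq n_1$, both $c_k \geq 2M$ and $c_k + M + 1 \leq F(k)$ hold; this is possible by \eqref{eq:type3fa1} and \eqref{eq:type3fa2}. Consider the deterministic set
\[ \mathcal{C}_{n_1} := \Big\{ \sum_{k \geq n_1} \tfrac{1}{v_k} \,:\, v_k \in [\lfloor F(k)\rfloor + 1 - M,\, \lfloor F(k)\rfloor + c_k + M] \cap \N \text{ for each } k \geq n_1 \Big\}, \]
which depends only on $F$, $c_n$, and $M$, not on the random $a_k$'s. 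A Kakeya-style gap-versus-spread argument in the spirit of Remark \ref{rem:Kakeya}---comparing the spacing $1/(v(v+1)) \geq 1/(4F(k)^2)$ between consecutive admissible reciprocals with the total tail spread $\sum_{l > k}(c_l + 2M)/F(l)^2 \leq 2\sum_{l > k} c_l/F(l)^2 < 1/(24 F(k)^2)$, where the final inequality is \eqref{eq:type3fa5}---shows that the parametrization $(v_k)_{k \geq n_1} \mapsto \sum_k 1/v_k$ is injective onto $\mathcal{C}_{n_1}$.

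Now fix any prefix $\sigma = (b_1, \ldots, b_{n_1 - 1})$ with each $b_k \in \{-M, \ldots, M\} \setminus \{0\}$, set $\widetilde{T}_{n_1}^\sigma := q - \sum_{k < n_1} 1/(a_k + b_k)$, and suppose $\widetilde{T}_{n_1}^\sigma \in \mathcal{C}_{n_1}$ (otherwise this prefix contributes nothing to $E_{M,q}$). By injectivity there is then a unique tuple $(v_k^\sigma)_{k \geq n_1}$ with $\sum 1/v_k^\sigma = \widetilde{T}_{n_1}^\sigma$, and crucially this tuple depends only on $\sigma$ and $(a_k)_{k < n_1}$, so it is independent of the random variables $(a_k)_{k \geq n_1}$. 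For $E_{M,q}$ to occur through $\sigma$, every $a_k$ with $k \geq n_1$ must satisfy $a_k \in \{v_k^\sigma - b : b \in \{-M, \ldots, M\} \setminus \{0\}\}$, a set of at most $2M$ integers. Using the mutual independence of $(a_k)_{k \geq n_1}$, the conditional probability of this event given $(a_k)_{k < n_1}$ is at most $\prod_{k \geq n_1}(2M)/c_k$, which vanishes because $c_k \to \infty$ by \eqref{eq:type3fa1}. Taking the countable union over the $(2M)^{n_1 - 1}$ prefixes $\sigma$ and then over $(M,q)$ gives $\mathbb{P}(E_{M,q}) = 0$ for every $(M, q)$, so Type 3 irrationality holds almost surely.

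The main conceptual obstacle is the move of working with the deterministic Cantor set $\mathcal{C}_{n_1}$ built from the full integer ranges $[\lfloor F(k)\rfloor + 1 - M, \lfloor F(k)\rfloor + c_k + M]$ rather than with the random achievement set of $\sum_{k \geq n_1} 1/(a_k + b_k)$ coming from the actual $(a_k)_{k \geq n_1}$; this is what decouples the forced target tuple $(v_k^\sigma)$ from the random variables $(a_k)_{k \geq n_1}$, so that the factorization argument via independence can be applied and yields the vanishing infinite product regardless of the precise growth rate of $(c_n)$.
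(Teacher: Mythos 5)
Your proof is correct and essentially follows the paper's strategy: establish a deterministic uniqueness statement for the target tuple of denominators (the paper's Lemma~\ref{lm:type3fa}, which you re-derive via a Kakeya-style gap-versus-spread estimate resting on the telescoped inequality~\eqref{eq:type3fa5}), combine this with independence to bound the probability of the random $a_k$'s hitting the forced tuple by the vanishing infinite product $\prod_{k}(2M)/c_k$, and take a countable union over the parameters. The only differences are organizational---the paper works directly with tail sums starting at $m_C$ and unions over all rational tail targets (using that the initial partial sum is automatically rational), whereas you condition on the prefix of $b$-values and initial $a$-values, which is a bit more cumbersome but equivalent---and one small slip in your spread estimate: writing the tail spread as $\sum_{l>k}(c_l+2M)/F(l)^2$ silently replaces the true denominator $(\lfloor F(l)\rfloor+1-M)(\lfloor F(l)\rfloor+c_l+M)$ by $F(l)^2$, which is a lower bound only up to a further constant factor; this still works because the factor $48$ in~\eqref{eq:type3fa5} leaves comfortable slack, but the stated constant $1/(24F(k)^2)$ should be slightly worse.
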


We will first need the following auxiliary result.

\begin{lemma}\label{lm:type3fa}
Fix an integer $m\geqslant n_0$.
All real numbers of the form
\[ \sum_{n=m}^{\infty} \frac{1}{\lfloor F(n)\rfloor + d_n} \]
obtained for sequences of integers $(d_n)_{n=m}^{\infty}$ satisfying $-c_n< d_n\leqslant 2c_n$ for every $n\geqslant m$
are mutually different.
\end{lemma}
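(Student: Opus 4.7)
The plan is a ``largest term dominates'' uniqueness argument: assuming two distinct admissible sequences produce the same tail sum, look at the first index $n^*\geq m$ at which they disagree, and show that the gap forced at that coordinate exceeds the total wobble of the remaining tail.

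More concretely, suppose $(d_n)_{n\geq m}$ and $(d_n')_{n\geq m}$ both lie in the allowed range $\{-c_n+1,\ldots,2c_n\}$ and generate equal sums. After cancelling the common initial terms, the equality becomes
\[ \frac{1}{\lfloor F(n^*)\rfloor + d_{n^*}'} - \frac{1}{\lfloor F(n^*)\rfloor + d_{n^*}} = \sum_{n=n^*+1}^{\infty}\biggl(\frac{1}{\lfloor F(n)\rfloor + d_n} - \frac{1}{\lfloor F(n)\rfloor + d_n'}\biggr). \]
Since $|d_{n^*}-d_{n^*}'|\geq 1$ and both denominators are at most $\lfloor F(n^*)\rfloor + 2c_{n^*}\leq 2F(n^*)$ by \eqref{eq:type3fb4}, the left-hand side has absolute value at least $1/(4F(n^*)^2)$. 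For the right-hand side, each summand is bounded by the total swing of the $n$th term,
\[ \frac{1}{\lfloor F(n)\rfloor - c_n + 1} - \frac{1}{\lfloor F(n)\rfloor + 2c_n} = \frac{3c_n-1}{(\lfloor F(n)\rfloor-c_n+1)(\lfloor F(n)\rfloor+2c_n)} \leq \frac{6c_n}{F(n)^2}, \]
where the inequality uses $\lfloor F(n)\rfloor - c_n + 1 \geq F(n)/2$ (from \eqref{eq:type3fb4}) and $\lfloor F(n)\rfloor + 2c_n \geq F(n)$ (from $c_n\geq 1$, which again follows from \eqref{eq:type3fb4}). The telescoping inequality \eqref{eq:type3fa5} then bounds the whole tail by $\sum_{n>n^*} 6c_n/F(n)^2 < 1/(8F(n^*)^2)$, which is strictly smaller than the left-hand side gap of $1/(4F(n^*)^2)$. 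This contradiction forces $d_n = d_n'$ for every $n\geq m$.

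I do not expect any real obstacle. Convergence of each candidate series is immediate from $F(n+1)/F(n)\to\infty$, which forces super-exponential growth of $F$ and hence $\sum 1/F(n)<\infty$; and the whole point of the constant $48$ in \eqref{eq:type3fa9} is precisely that $6/48 < 1/4$, so that the tail bound beats the gap bound with room to spare. Everything else reduces to elementary reciprocal-difference arithmetic, making this essentially the standard Kakeya-type verification of condition \eqref{eq:KakCond5} applied to the admissible term set at each coordinate.
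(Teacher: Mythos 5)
Your proof is correct and takes the same route as the paper's: identify the first index $n^*$ where the two admissible sequences disagree, bound the gap contributed by that single term from below by $1/(4F(n^*)^2)$ using \eqref{eq:type3fb4}, and bound the total wobble of the tail from above using \eqref{eq:type3fb4} again together with the telescoped inequality \eqref{eq:type3fa5}, so that the gap dominates. The only cosmetic difference is the intermediate constant (you get $6c_n/F(n)^2$ as the per-term tail bound where the paper uses $12c_n/F(n)^2$); both are comfortably below the $48$ built into \eqref{eq:type3fa9}.
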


\begin{proof}[Proof of Lemma \ref{lm:type3fa}]
Suppose that $(d_k)_{k=m}^{\infty}$ and $(d'_k)_{k=m}^{\infty}$ are distinct sequences of positive integers such that $d_k,d'_k\in (-c_k, 2c_k]$ for every index $k\geqslant m$. Let $n\geqslant m$ be the smallest index at which the two sequences differ and assume, without loss of generality, that $d_n<d'_n$.
Then we can estimate
\begin{align*}
& \sum_{k=m}^{\infty} \frac{1}{\lfloor F(k)\rfloor + d_k} - \sum_{k=m}^{\infty} \frac{1}{\lfloor F(k)\rfloor + d'_k} \\
& = \frac{d'_n - d_n}{(\lfloor F(n)\rfloor + d_n)(\lfloor F(n)\rfloor + d'_n)} + \sum_{k=n+1}^{\infty} \frac{d'_k - d_k}{(\lfloor F(k)\rfloor + d_k)(\lfloor F(k)\rfloor + d'_k)} \\
& \geqslant \frac{1}{(F(n) + 2c_n)^2} - \sum_{k=n+1}^{\infty} \frac{3c_k}{(F(k) - c_k)^2}
\stackrel{\eqref{eq:type3fb4}}{\geqslant} \frac{1}{4F(n)^2} - \sum_{k=n+1}^{\infty} \frac{12c_k}{F(k)^2}
\stackrel{\eqref{eq:type3fa5}}{>} 0,
\end{align*}
so the two series that we began with have different sums.
\end{proof}

\begin{proof}[Proof of Proposition \ref{prop:type3faster}]
Let $\mathbb{P}$ be the underlying probability measure on the aforementioned collection of sequences $(a_n)_{n=1}^{\infty}$; it is the infinite product of translates of uniform probability measures $\mathbb{P}_{c_n}$ on the sets $\{1,2,3,\ldots,c_n\}$ for $n=n_0,n_0+1,\ldots$.
For a fixed $C\in\mathbb{N}$ let $m_C\geqslant n_0$ be a positive integer such that $c_n \geqslant 2C+2$ for every $n\geqslant m_C$; it exists by property \eqref{eq:type3fa1}.
Denote the event
\begin{align*}
E_C := \Big\{ (a_n)_{n=1}^{\infty} \,:\ & \text{there exists a sequence } (b_n)_{n=1}^{\infty} \text{ such that } \\
& b_n\in\mathbb{Z}, \ -C\leqslant b_n \leqslant C, \ b_n\neq -a_n \text{ for every } n\in\mathbb{N} \text{ and } \sum_{n=1}^{\infty} \frac{1}{a_n+b_n} \in \mathbb{Q} \Big\}
\end{align*}
and write it as a union $E_C = \bigcup_{q\in\mathbb{Q}} E_{C,q}$ of
\begin{align*}
E_{C,q} := \Big\{ (a_n)_{n=1}^{\infty} \,:\ & \text{there exists an integer sequence } (b_n)_{n=m_C}^{\infty} \text{ such that } \\
& -C\leqslant b_n \leqslant C \text{ for every } n\geqslant m_C \text{ and } \sum_{n=m_C}^{\infty} \frac{1}{a_n+b_n} = q \Big\}.
\end{align*}
Now we also fix $q\in\mathbb{Q}$.
From Lemma \ref{lm:type3fa} we know that there exists at most one integer sequence $(d_n)_{n=m_C}^{\infty}$ satisfying $-c_n< d_n\leqslant 2c_n$ for every $n\geqslant m_C$ and
\[ \sum_{n=m_C}^{\infty} \frac{1}{\lfloor F(n)\rfloor + d_n} = q. \]
If there are no such sequences, then clearly $E_{C,q}=\emptyset$.
If there is precisely one such sequence $(d_n)_{n=m_C}^{\infty}$, then, for every $(a_n)_{n=1}^{\infty}\in E_{C,q}$ and $(b_n)_{n=m_C}^{\infty}$ as in the definition of $E_{C,q}$, we have
\[ a_n+b_n = \lfloor F(n)\rfloor + d_n \]
for every $n\geqslant m_C$.
In the latter case,
\[ E_{C,q} = \bigcap_{n=m_C}^{\infty} \big\{ (a_n)_{n=1}^{\infty} \,:\, \lfloor F(n)\rfloor + d_n - C \leqslant a_n \leqslant \lfloor F(n)\rfloor + d_n + C \big\}, \]
which implies
\begin{align*}
\mathbb{P}(E_{C,q}) & = \lim_{N\to\infty} \prod_{n=m_C}^{N} \mathbb{P}_{c_n}\big(\big\{ i\in\{1,2,\ldots,c_n\} : d_n - C \leqslant i \leqslant d_n + C \big\}\big) \\
& \leqslant \lim_{N\to\infty} \prod_{n=m_C}^{N} \frac{2C+1}{c_n} \leqslant \lim_{N\to\infty} \prod_{n=m_C}^{N} \frac{2C+1}{2C+2} = 0.
\end{align*}
Thus, $\mathbb{P}(\bigcup_{C\in\mathbb{N}}E_{C})=0$, while the complement of $\bigcup_{C\in\mathbb{N}}E_{C}$ consists solely of Type 3 irrationality sequences $(a_n)_{n=1}^{\infty}$.
Here we also need to observe that every sequence $(a_n)_{n=1}^{\infty}$ constructed this way is strictly increasing by \eqref{eq:type3fa4} and it also satisfies $a_n\approx F(n)$ thanks to \eqref{eq:type3fa2}.
\end{proof}

\begin{remark}\label{rem:type3}
Note that the previous proof shows that almost every $(a_n)_{n=1}^{\infty}$ is an irrationality sequence in a slightly stronger sense: with the constraint $b_n\neq 0$ omitted from the definition of a Type 3 irrationality sequence.
\end{remark}

The main theorem of interest is now an easy consequence.

\begin{proof}[Proof of Theorem \ref{thm:type3faster}]
If we are only given $F$ such that \eqref{eq:type3fa0} holds, then we can choose $c_1:=0$ and
\[ c_n := \min \biggl\{\bigl\lfloor F(n)^{1/2} \bigr\rfloor,\
\biggl\lfloor\frac{F(n)}{F(n-1)}\biggr\rfloor \biggr\} \]
for $n\geqslant 2$, which clearly satisfies \eqref{eq:type3fa1}--\eqref{eq:type3fa3}.
\end{proof}


\section{Proof of Theorem \ref{thm:higherD} on higher-dimensional series}
\label{sec:proofhigher}

In order to be consistent with the next section, we prefer to consider several Ahmes series in a general ``reshuffled'' order:
\[ \sum_k \frac{1}{a_k + t_i}, \quad i=1,2,3,\ldots, \]
where $(t_i)_{i=1}^{\infty}$ is some sequence of different rational numbers.
In the proof of Theorem \ref{thm:higherD} we will simply take $t_i=i-1$, while in the proof of Theorem \ref{thm:Stolarsky} the sequence $(t_i)$ will be an enumeration of $\mathbb{Q}$. However, let us initially keep $(t_i)$ quite general and only require a ``mild'' growth condition,
\begin{equation}\label{eq:growthcond}
|t_i| \leqslant i
\end{equation}
for every $i\in\mathbb{N}$.

We need to make a change of variables $U$ that will help us measure correctly simultaneous closeness of the series' partial sums to their target values.
In other words, we want coordinates in which natural bases of neighborhoods will be formed by axes-aligned rectangular boxes of appropriate eccentricities.
To achieve that, for every $i\in\mathbb{N}$ we consider the partial fraction decomposition
\[ \frac{1}{\prod_{j=1}^{i}(x+t_j)} = \sum_{j=1}^{i} \frac{m_{i,j}}{x+t_j}, \]
which holds with the unique coefficients $m_{i,1},m_{i,2},\ldots,m_{i,i}$.
These coefficients depend on the sequence $(t_i)$, but the above identity holds for all real $x$. They are rational numbers, as a unique solution of a linear system with rational coefficients. Also, none of them is $0$, since otherwise some of the poles $x=-t_j$ of the left hand side would not be a singularity of the right hand side.
Alternatively, multiplying by $x+t_j$ and taking the limit as $x\to -t_j$ we get the explicit formula
\[ m_{i,j} = \frac{1}{\prod_{1\leqslant k\leqslant i,\, k\neq j} (t_k-t_j)}, \]
which will not be needed in the argument.
Let a linear transformation $U\colon\mathbb{R}^{\mathbb{N}}\to\mathbb{R}^{\mathbb{N}}$ be defined by
\[ U(x_i)_{i=1}^{\infty} := \biggl(\sum_{j=1}^{i} m_{i,j} x_j \biggr)_{i=1}^{\infty}. \]
For every $i\in\mathbb{N}$ define $f_i\colon\mathbb{R} \to \mathbb{R}$ as
\[ f_i(x) :=
\begin{cases}
\displaystyle\frac{1}{\prod_{j=1}^{i}(x+t_j)} & \text{ for } x\in\mathbb{R}\setminus\{-t_1,\ldots,-t_i\}, \\
0 & \text{ otherwise}
\end{cases} \]
and note that $U$ was chosen precisely to satisfy the sequence of rational function identities
\begin{equation}\label{eq:howUmaps}
U \Big( \frac{1}{x+t_1}, \frac{1}{x+t_2}, \frac{1}{x+t_3}, \ldots \Big)
= \big( f_1(x), f_2(x), f_3(x), \ldots \big).
\end{equation}
Since $U$ is clearly invertible, owing to $m_{i,i}\neq 0$ for every index $i\in\mathbb{N}$, Theorem \ref{thm:higherD} can now be restated as: for every $d\in\mathbb{N}$ there is some $\beta>1$ such that the set
\begin{align}
\Bigl\{ \Bigl( \sum_{n=1}^{\infty} f_i(a_n) \Bigr)_{i=1}^{d} \,:\
& \text{$(a_n)_{n=1}^{\infty}$ is a strictly increasing} \nonumber \\[-3mm]
& \text{sequence in $\mathbb{N}$ such that } \lim_{n\to\infty} a_n^{1/\beta^n} = \infty \Bigr\} \label{eq:strongerhd}
\end{align}
has a non-empty interior in $\mathbb{R}^d$.

We begin with the following two auxiliary results.
The first one asserts that $f_i$ behaves locally like a linear function of slope $-i/N^{i+1}$ near a given large number $N$.

\begin{lemma}\label{lm:infDest}
For every $i\in\mathbb{N}$ there exists a constant $C_i\in(0,\infty)$ such that the inequality
\begin{equation}\label{eq:infDsillyineq}
\Big| f_i(N) - f_i(N+n) - \frac{i n}{N^{i+1}} \Big| \leqslant \frac{C_i n^2}{N^{i+2}}
\end{equation}
holds for all $N\in\mathbb{N}$ and $n\in\mathbb{Z}$ such that $|n|\leqslant N/4i$.
\end{lemma}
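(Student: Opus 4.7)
The plan is to prove \eqref{eq:infDsillyineq} by Taylor's theorem applied to $f_i$ at the point $N$ with a second-order remainder, combined with a direct asymptotic expansion of $f_i'(N)$ in powers of $1/N$. The identity is essentially a quantitative statement that, near $N$, $f_i$ behaves like a linear function with slope $-i/N^{i+1}$, with deviations quadratic in the displacement and an extra gain of one power of $1/N$.

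First, I would handle boundary cases: if $N \leq 4i$, then the constraint $|n| \leq N/(4i)$ forces $|n| \leq 1$, producing only finitely many pairs $(N,n)$ to check. For each such pair, $|t_j| \leq j \leq i$ together with $N+n \geq 1$ keeps the factors $N+n+t_j$ bounded away from zero, so the left side of \eqref{eq:infDsillyineq} is finite, and all these exceptional cases are absorbed by choosing $C_i$ sufficiently large. Henceforth I assume $N > 4i$. Then for $|n| \leq N/(4i)$ and every $j \leq i$, each factor $N+n+t_j$ lies in the interval $[N/2,\,2N]$, so $f_i$ is smooth on a neighborhood of the segment connecting $N$ to $N+n$.

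The main step is Taylor's theorem,
$$f_i(N+n) = f_i(N) + n f_i'(N) + \tfrac{1}{2} n^2 f_i''(\xi)$$
for some $\xi$ between $N$ and $N+n$. To bound $f_i''(\xi)$ I would use the logarithmic-derivative identity
$$\frac{f_i''(x)}{f_i(x)} = \Bigl(\sum_{j=1}^{i}\frac{1}{x+t_j}\Bigr)^{\!2} + \sum_{j=1}^{i}\frac{1}{(x+t_j)^{2}},$$
together with the estimates $|\xi+t_j| \geq N/2$ and $|f_i(\xi)| \leq (2/N)^i$, yielding $|f_i''(\xi)| = O_i(N^{-i-2})$ and hence a Taylor remainder of size $O_i(n^2/N^{i+2})$. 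In parallel, I would derive the asymptotic
$$-f_i'(N) = f_i(N)\sum_{j=1}^{i}\frac{1}{N+t_j} = \frac{i}{N^{i+1}} + O_i\!\left(\frac{1}{N^{i+2}}\right)$$
by expanding each $1/(N+t_j)$ as a geometric series in $t_j/N$ (valid since $|t_j/N| < 1/4$), keeping the two leading terms in $1/N$, and multiplying out.

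Combining the two expansions gives
$$f_i(N) - f_i(N+n) - \frac{in}{N^{i+1}} = O_i\!\left(\frac{|n|}{N^{i+2}}\right) + O_i\!\left(\frac{n^2}{N^{i+2}}\right).$$
Since $n \in \Z$, either $n=0$ (in which case the inequality reads $0 \leq 0$) or $|n| \geq 1$, which allows the linear error term to be absorbed into the quadratic one via $|n| \leq n^2$. The main obstacle is bookkeeping: tracking the implicit $i$-dependence through both the Taylor remainder and the expansion of $f_i'(N)$ carefully enough to see everything collapse into a single clean $O_i(n^2/N^{i+2})$ bound, and choosing $C_i$ large enough to cover also the finitely many small-$N$ exceptions.
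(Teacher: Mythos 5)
Your proof is correct, and it takes a genuinely different route from the paper's. The paper argues purely algebraically: after discarding the trivial case $n=0$ (which, together with $|n|\le N/4i$, forces $N\ge 4i$), it combines the three terms of the difference over the common denominator $N^{i+1}\prod_{j=1}^i(N+t_j)(N+n+t_j)$, bounds that denominator from below by $2^{-2i}N^{3i+1}$, and then bounds the numerator from above by $2^{8i}n^2N^{2i-1}$ after observing that the coefficients of $N^{2i+1}$ and $N^{2i}$ cancel when the numerator is viewed as a polynomial in $N$. That calculation is completely elementary and yields the explicit constant $C_i=2^{10i}$. Your approach instead invokes Taylor's theorem with Lagrange remainder, estimating $f_i''$ via the logarithmic-derivative identity and separately expanding $-f_i'(N)=f_i(N)\sum_j 1/(N+t_j)$ to show it equals $i/N^{i+1}+O_i(N^{-i-2})$; your argument is more conceptual in that it explains \emph{why} the slope is $-i/N^{i+1}$, but it requires tracking two error sources (the quadratic remainder and the $1/N^{i+2}$-error in $f_i'(N)$) and then merging them. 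One step you correctly flag is essential to your route but absent from the paper's: the linear error term $O_i(|n|/N^{i+2})$ coming from the error in $f_i'(N)$ has to be absorbed into $O_i(n^2/N^{i+2})$, and this only works because $n$ is a \emph{nonzero integer}, so that $|n|\le n^2$. The paper sidesteps this entirely by never isolating a linear-in-$n$ term: the cancellation in its numerator already leaves only terms that are at least quadratic in $n$. Both proofs also handle the degenerate small-$N$ region in essentially the same way ($n\ne 0$ forces $N\ge 4i$), so your ``finitely many exceptional pairs'' remark is a harmless overcaution. In short: correct, with a calculus-based rather than polynomial-identity-based argument; the paper's version is shorter and produces an explicit $C_i$, yours is more transparent about the source of the linear approximation.
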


\begin{proof}[Proof of Lemma \ref{lm:infDest}]
Inequality \eqref{eq:infDsillyineq} is trivial for $n=0$, so assume that $n\neq 0$, which also implies $N\geqslant 4i$.
Consider the expression inside the absolute values on the left hand side of \eqref{eq:infDsillyineq}.
Recalling \eqref{eq:growthcond}, we see that its denominator is
\[ N^{i+1} \prod_{j=1}^{i} (N+t_j)(N+n+t_j) \geqslant 2^{-2i} N^{3i+1}. \]
On the other hand, its numerator
\[ N^{i+1} \prod_{j=1}^{i} (N+n+t_j) - N^{i+1} \prod_{j=1}^{i} (N+t_j) - i n \prod_{j=1}^{i} (N+t_j)(N+n+t_j) \]
is, in the absolute value, at most
\[
N^{i+1} 2^i N^{i-2} (|n|+i)^2
+ N^{i+1} 2^i N^{i-2} i^2
+ i n 2^{2i} N^{2i-1} (|n|+i)
\leqslant 2^{8i} n^2 N^{2i-1},
\]
due to the cancellation of the coefficients next to $N^{2i+1}$ and $N^{2i}$, when viewing it as a polynomial in $N$.
After division we see that \eqref{eq:infDsillyineq} is satisfied with $C_i = 2^{10i}$.
\end{proof}

The second auxiliary result will be crucial for the approximation of certain points in $\mathbb{R}^d$ by the Ahmes series' terms.

\begin{lemma}\label{lm:quantest}
For every $d\in\mathbb{N}$ there exist constants $0<\varepsilon_d\leqslant 1\leqslant D_d<\infty$ such that the following holds for all $N\in\mathbb{N}$ and $M\in\mathbb{Z}$ satisfying $0\leqslant M\leqslant N/4d$.
Define a point $s\in\mathbb{R}^d$ and a set $S\subset\mathbb{R}^d$ as
\begin{align*}
s & := \Big( \sum_{j=1}^{d} f_i(j N) \Big)_{i=1}^{d}, \\
S & := \Bigl\{ \Big( \sum_{j=1}^{d} f_i(j N + n_j) \Big)_{i=1}^{d} \,:\, n_1,\ldots,n_d\in[-M,M]\cap\mathbb{Z} \Bigr\}.
\end{align*}
Then
\begin{equation}\label{eq:mainsubset}
s + \varepsilon_d \prod_{i=1}^{d} \Bigl[ -\frac{M}{N^{i+1}}, \frac{M}{N^{i+1}} \Bigr]
\subseteq S + D_d \prod_{i=1}^{d} \biggl[ -\Big( \frac{1}{N^{i+1}} +\frac{M^2}{N^{i+2}} \Big), \Big( \frac{1}{N^{i+1}} +\frac{M^2}{N^{i+2}} \Big) \biggr].
\end{equation}
\end{lemma}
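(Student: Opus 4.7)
The plan is to linearize each term $f_i(jN+n_j)$ via Lemma~\ref{lm:infDest}, reduce the approximation problem to inverting a small Vandermonde-type linear system with rational coefficients, and round the real solution to integers in $[-M,M]$.

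First, I would fix $1\le i,j\le d$ and note that the hypothesis $|n_j|\le M\le N/(4d)\le jN/(4i)$ lets me apply Lemma~\ref{lm:infDest} with the substitution $N\mapsto jN$, $n\mapsto n_j$, yielding
\[ f_i(jN+n_j) \;=\; f_i(jN) \;-\; \frac{i\,n_j}{(jN)^{i+1}} \;+\; E_{i,j},\qquad |E_{i,j}|\le \frac{C_i\,M^2}{N^{i+2}}. \]
Summing over $j=1,\dots,d$, for every tuple $(n_1,\dots,n_d)\in ([-M,M]\cap\Z)^d$ the corresponding point of $S$ satisfies
\[ \sum_{j=1}^d f_i(jN+n_j) - s_i \;=\; -\frac{i}{N^{i+1}}\sum_{j=1}^d \frac{n_j}{j^{i+1}} + \epsilon_i,\qquad |\epsilon_i|\le \frac{d\,C_i\,M^2}{N^{i+2}}. \]

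Next, I would introduce the $d\times d$ rational matrix $A$ with entries $A_{ij}:=1/j^{i+1}$. Factoring $1/j$ out of the $j$-th column exhibits $A$ as a Vandermonde matrix in the distinct nodes $1,1/2,\dots,1/d$ multiplied on the right by an invertible diagonal matrix, so $A$ is invertible with operator norm $K_d:=\|A^{-1}\|_{\ell^\infty\to\ell^\infty}$ depending only on $d$. Set $\varepsilon_d:=\min\{1,\,1/(2K_d)\}$. For any target $y\in\R^d$ with $|y_i|\le \varepsilon_d M/N^{i+1}$, define $w_i:=-y_iN^{i+1}/i$, so that $\|w\|_\infty\le \varepsilon_d M$. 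The unique real solution $m\in\R^d$ of $Am=w$ satisfies $\|m\|_\infty\le K_d\|w\|_\infty\le M/2$, so I can round each coordinate to the nearest integer $n_j$, keeping $|n_j-m_j|\le 1/2$ and $|n_j|\le M/2+1/2\le M$ (the trivial case $M=0$ just gives $s\in S$ directly).

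Finally, since $\sum_j m_j/j^{i+1}=w_i=-y_iN^{i+1}/i$, the point $z\in S$ produced by $(n_1,\dots,n_d)$ obeys
\[ z_i - (s_i+y_i) \;=\; \frac{i}{N^{i+1}}\sum_{j=1}^d \frac{m_j-n_j}{j^{i+1}} + \epsilon_i, \]
whose modulus is at most $d^2/(2N^{i+1})+d\,C_i\,M^2/N^{i+2}$ (using $i\le d$ and $\sum_{j=1}^d 1/j^{i+1}\le d$). Taking $D_d:=\max\{1,\,d^2/2,\,dC_1,\dots,dC_d\}$ absorbs both contributions into the RHS error box of \eqref{eq:mainsubset}, giving the stated inclusion.

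I expect the main obstacle to be verifying, with a $d$-only bound, the invertibility of the coefficient matrix $A=(1/j^{i+1})$; this is what decouples the $d$ output coordinates and permits choosing the integers $n_j$ with bounds independent of $N$ and $M$. Once this structural ingredient is in hand, the rest is careful bookkeeping: the linear term of the local expansion from Lemma~\ref{lm:infDest} absorbs the prescribed $y$ (scale $M/N^{i+1}$), while the quadratic remainder $\epsilon_i$ realizes the smaller $M^2/N^{i+2}$ term in the error box on the right.
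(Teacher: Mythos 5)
Your argument is correct, and its high-level plan is the same as the paper's: linearize each $f_i(jN+n_j)$ via Lemma~\ref{lm:infDest}, and exploit the invertibility of the Vandermonde-type matrix $(1/j^{i+1})_{i,j}$ to decouple the $d$ coordinates. The one place where you genuinely diverge is in how you locate integer tuples $(n_1,\dots,n_d)\in[-M,M]^d$ hitting a given target: you solve the linear system $Am=w$ over the reals (bounding $\|m\|_\infty\le M/2$ via the operator norm $K_d$ of $A^{-1}$) and then round coordinatewise, paying at most $1/2$ per coordinate. The paper instead runs a lattice-containment argument: it factors $V=(j^{-i-1})$ as $V=(VD)D^{-1}$, observes that $VD$ and $v_d(VD)^{-1}$ both have integer entries (with $v_d$ the Vandermonde determinant), deduces $V\Z^d\supseteq v_d\Z^d$, and then finds a nearby point of this sublattice. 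Both produce an $O_d(1/N^{i+1})$ ``integrality'' error that $D_d$ absorbs alongside the $O_d(M^2/N^{i+2})$ term from Lemma~\ref{lm:infDest}, and both therefore give the desired inclusion. Your rounding route is a bit shorter and arguably easier to verify; the paper's lattice route exposes the integer structure ($v_d\Z^d\subseteq V\Z^d$) more explicitly. One small imprecision: to exhibit $A$ as a genuine Vandermonde (exponents $0,\dots,d-1$ in the nodes $1,1/2,\dots,1/d$) one should factor $1/j^2$, not $1/j$, out of the $j$-th column; this does not affect the conclusion.
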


If $M$ is taken to be roughly equal to $N^{1/2}$, then the lemma can be interpreted as follows. Every point in $\mathbb{R}^d$ that differs from $s$ in the $i$-th coordinate by $O(N^{-i-1/2})$ for $i=1,2,\ldots,d$ can be approximated by a point from $S$ up to an error which is $O(N^{-i-1})$ in the $i$-th coordinate for each $i$. This ``gain'' of $N^{-1/2}$ will allow us to construct successive approximations of points from a whole rectangular box by terms of a subseries of $(\sum_n f_i(n))_{i=1}^{d}$.

\begin{proof}[Proof of Lemma \ref{lm:quantest}]
For $n_1,\ldots,n_d\in\mathbb{Z}$ denote
\[ p(n_1,\ldots,n_d) := \Big( \frac{i}{N^{i+1}} \sum_{j=1}^{d} \frac{n_j}{j^{i+1}} \Big)_{i=1}^{d} \in \mathbb{R}^d \]
and
\[ \Delta(n_1,\ldots,n_d) := \biggl( \sum_{j=1}^{d} \Bigl( f_i(j N)-f_i(j N + n_j)-\frac{i n_j}{j^{i+1} N^{i+1}} \Bigr) \biggr)_{i=1}^{d} \in \mathbb{R}^d. \]
The set $S$ can now be written as
\[ S = \Bigl\{ s - p(n_1,\ldots,n_d) - \Delta(n_1,\ldots,n_d) \,:\, n_1,\ldots,n_d\in[-M,M]\cap\mathbb{Z} \Bigr\}, \]
while inequality \eqref{eq:infDsillyineq} and $\sum_{j=1}^{d}j^{-i-2}<2$ guarantee that
\begin{equation}\label{eq:deltaQ}
\Delta(n_1,\ldots,n_d) \in \prod_{i=1}^{d} \Bigl[ -\frac{2C_i M^2}{N^{i+2}}, \frac{2C_i M^2}{N^{i+2}} \Bigr]
\end{equation}
for every integers $n_1,\ldots,n_d\in[-M,M]$.

Let $V,D\colon\mathbb{R}^d\to\mathbb{R}^d$ be linear transformations given by their matrices
\[ V = \big(j^{-i-1}\big)_{\substack{1\leqslant i\leqslant d\\ 1\leqslant j\leqslant d}}
= \begin{pmatrix} 1^{-2} & 2^{-2} & \cdots & d^{-2} \\
1^{-3} & 2^{-3} & \cdots & d^{-3} \\
\vdots & \vdots & \cdots & \vdots \\
1^{-d-1} & 2^{-d-1} & \cdots & d^{-d-1}
\end{pmatrix} \]
and
\[ D = \mathop{\textup{diag}}\big( 1^{d+1}, 2^{d+1}, \ldots, d^{d+1} \big) \]
in the standard basis of $\mathbb{R}^d$.
Note that
\[ VD = \begin{pmatrix} 1^{d-1} & 2^{d-1} & \cdots & d^{d-1} \\
\vdots & \vdots & \cdots & \vdots \\
1^{1} & 2^{1} & \cdots & d^{1} \\
1^{0} & 2^{0} & \cdots & d^{0}
\end{pmatrix} \]
has integer coefficients and the formula for the Vandermonde determinant gives
\[ v_d := |\det(VD)| = \prod_{1\leqslant i<j\leqslant d} (j-i) \in \mathbb{N}. \]
Then $v_d (VD)^{-1}$ exists and it also has integer coefficients, so it maps $v_d (VD)^{-1} \mathbb{Z}^d \subseteq \mathbb{Z}^d$.
Consequently,
\[ V\mathbb{Z}^d \supseteq VD\mathbb{Z}^d \supseteq v_d (VD) (VD)^{-1} \mathbb{Z}^d = v_d \mathbb{Z}^d, \]
i.e., $V\mathbb{Z}^d$ contains the integer sub-lattice $v_d \mathbb{Z}^d$.
If we also denote
\[ \varepsilon_d := \inf_{x\in\mathbb{R}^d,\,x\neq\mathbf{0}} \frac{|Vx|_{\infty}}{|x|_{\infty}} =  >0, \]
then
\[ V \big( [-M,M]^d \cap \mathbb{Z}^d \big) \supseteq[-\varepsilon_d M,\varepsilon_d M]^d \cap v_d\mathbb{Z}^d, \]
so, recalling the definitions of $p$ and $V$, we obtain
\begin{equation}\label{eq:pkcontains}
\Bigl\{ p(n_1,\ldots,n_d) \,:\, n_1,\ldots,n_d\in[-M,M]\cap\mathbb{Z} \Bigr\}
\supseteq \prod_{i=1}^{d} \biggl( \Bigl[-\frac{\varepsilon_d i M}{N^{i+1}},\frac{\varepsilon_d i M}{N^{i+1}}\Bigr] \cap \frac{i v_d}{N^{i+1}}\mathbb{Z} \biggr).
\end{equation}

Now we can finalize the proof.
For every point $x$ from the rectangular box on the left hand side of \eqref{eq:mainsubset} we have
\[ s-x\in \prod_{i=1}^{d} \Bigl[-\frac{\varepsilon_d i M}{N^{i+1}},\frac{\varepsilon_d i M}{N^{i+1}}\Bigr], \]
so by \eqref{eq:pkcontains} we can find $n_1,\ldots,n_d\in[-M,M]\cap\mathbb{Z}$ such that
\begin{equation}\label{eq:pkcontains2}
p(n_1,\ldots,n_d) - (s-x) \in \prod_{i=1}^{d} \Bigl[-\frac{i v_d}{N^{i+1}},\frac{i v_d}{N^{i+1}}\Bigr].
\end{equation}
If we define
\[ y := s - p(n_1,\ldots,n_d) - \Delta(n_1,\ldots,n_d) \in S, \]
then, by \eqref{eq:pkcontains2} and \eqref{eq:deltaQ},
\begin{align*}
x - y & = \big(x-s+p(n_1,\ldots,n_d)\big) + \Delta(n_1,\ldots,n_d) \\
& \in \prod_{i=1}^{d} \Bigl[ -\frac{i v_d}{N^{i+1}} -\frac{2C_i M^2}{N^{i+2}}, \frac{i v_d}{N^{i+1}} +\frac{2C_i M^2}{N^{i+2}} \Bigr],
\end{align*}
so \eqref{eq:mainsubset} follows with
\[  D_d := \max\{ d v_d, 2C_1, \ldots, 2C_d \}. \qedhere \]
\end{proof}

Concrete values of the constants $\varepsilon_d$ and $D_d$ are certainly not important, but a closer inspection of the arguments can easily replace them with
\[ \varepsilon_d = d^{-10d^2},\quad D_d = (4d)^{10d^2}. \]

After all this preliminary material, we are ready for the actual proof of Theorem \ref{thm:higherD}.
In some sense we are adapting the proof of Lemma \ref{lm:1D} to several dimensions.
As we have already announced, in the following proof we take $t_i=i-1$, but it works equally well for any concrete choice of $(t_i)$.

\begin{proof}[Proof of Theorem \ref{thm:higherD}]
Fix a positive integer $d$ and let $\varepsilon_d$ and $D_d$ be the constants from Lemma \ref{lm:quantest}.
We can now prove that \eqref{eq:strongerhd} has a non-empty interior for any $\beta$ that satisfies
\begin{equation}\label{eq:thebeta}
1 < \beta < \Big(\frac{2d+2}{2d+1}\Big)^{1/d}.
\end{equation}
After fixing $\beta$, we further select any number $\alpha$ such that
\begin{equation}\label{eq:thealpha}
\beta^d < \alpha < \frac{2d+2}{2d+1}
\end{equation}
and define a sequence of positive integers $(N_k)_{k=1}^{\infty}$ by
\[ N_k := \begin{cases}
(2d+1)^k & \text{for } 1\leqslant k\leqslant k_0, \\
\big\lfloor 2^{\alpha^k} \big\rfloor & \text{for } k>k_0,
\end{cases} \]
with $k_0\in\mathbb{N}$ chosen sufficiently large that
\begin{equation}\label{eq:whatisn1}
N_{k+1} \geqslant (2d+1) N_k
\end{equation}
holds for every $k\in\mathbb{N}$.
Note that the choice \eqref{eq:thealpha} of $\alpha$ guarantees
\begin{equation}\label{eq:whoan}
\lim_{k\to\infty} \frac{N_{k+1}}{N_k^{(2d+2)/(2d+1)}} = 0.
\end{equation}
Also define
\begin{equation}\label{eq:whatism}
M_k := \min\Bigl\{ \Big\lfloor\frac{N_k}{4d}\Big\rfloor , \big\lfloor N_k^{1/2}\big\rfloor \Bigr\},
\end{equation}
so that, in particular,
\begin{equation}\label{eq:whatism9}
M_k\approx N_k^{1/2}.
\end{equation}

The principal idea is to consider the collection of sequences
\[ \mathcal{A} := \big\{ (a_n)_{n=1}^{\infty} \in {\mathbb{Z}}^{\mathbb{N}} \,:\, a_{(k-1)d+j} \in [j N_k-M_k,j N_k+M_k] \text{ for } k\in\mathbb{N} \text{ and } j\in\{1,2,\ldots,d\} \big\} \]
and prove that the set
\begin{equation}\label{eq:fullsersums}
\Bigl\{ \Bigl( \sum_{n=1}^{\infty} f_i(a_n) \Bigr)_{i=1}^{d} \,:\, (a_n)_{n=1}^{\infty} \in\mathcal{A} \Bigr\} \subset \mathbb{R}^d
\end{equation}
has a non-empty interior.
The assumption \eqref{eq:whatisn1} and the definition \eqref{eq:whatism} guarantee that every sequence from $\mathcal{A}$ is positive and strictly increasing.
Moreover, for every sequence $(a_n)_{n=1}^{\infty}$ from $\mathcal{A}$ and every sufficiently large index $n=(k-1)d+j$ for some $k\in\mathbb{N}$ and $j\in\{1,2,\ldots,d\}$, we have
\[ a_n \geqslant N_k - M_k \stackrel{\eqref{eq:whatism}}{>} \frac{1}{2} N_k \geqslant \frac{1}{4} 2^{\alpha^{n/d}}. \]
Thus, $\lim_{n\to\infty} a_n^{1/\beta^n} = \infty$, thanks to the choice \eqref{eq:thealpha} of $\alpha$ again.

For every $k\in\mathbb{N}$ denote
\begin{align*}
s_k & := \Big( \sum_{j=1}^{d} f_i(j N_k) \Big)_{i=1}^{d} \in \mathbb{R}^d, \\
S_k & := \Bigl\{ \Big( \sum_{j=1}^{d} f_i(j N_k + n_j) \Big)_{i=1}^{d} \,:\, n_1,\ldots,n_d\in[-M_k,M_k]\cap\mathbb{Z} \Bigr\} \subset \mathbb{R}^d,
\end{align*}
and
\[ R_k := \prod_{i=1}^{d} \Bigl[ -\frac{\varepsilon_d M_k}{N_k^{i+1}}, \frac{\varepsilon_d M_k}{N_k^{i+1}} \Bigr]. \]
Lemma \ref{lm:quantest} will apply and give
\begin{equation}\label{eq:inductivesubset}
s_k + R_k \subseteq S_k + R_{k+1}
\end{equation}
as soon as we have
\begin{equation}\label{eq:hdmaincond}
\frac{D_d}{N_k^{i+1}} + \frac{D_d M_k^2}{N_k^{i+2}} \leqslant \frac{\varepsilon_d M_{k+1}}{N_{k+1}^{i+1}},
\end{equation}
but this is satisfied for sufficiently large indices $k\in\mathbb{N}$ and every $1\leqslant i\leqslant d$.
Namely, after recalling \eqref{eq:whatism9}, the quotient of the left and the right hand sides of \eqref{eq:hdmaincond} is asymptotically, as $k\to\infty$ for a fixed $i$, equal to
\[ \frac{2D_d}{\varepsilon_d} \frac{N_{k+1}^{i+1/2}}{N_k^{i+1}} \stackrel{k\to\infty}{\longrightarrow} 0, \]
where we used \eqref{eq:whoan}.

An easy consequence of the rapid growth of $(N_k)$ and the definition of $f_i$ is the convergence in $\mathbb{R}^d$ of any series $\sum_{k=1}^{\infty} y_k$ with terms $y_k\in S_k$,
and observe that \eqref{eq:fullsersums} is precisely equal to the set of all their sums.
Let $m\in\mathbb{N}$ be sufficiently large so that \eqref{eq:hdmaincond}, and thus also \eqref{eq:inductivesubset}, is fulfilled for every $k\geqslant m$.
We claim that the set of series' tail sums
\begin{equation}\label{eq:hdtail}
\Big\{ \sum_{k=m}^{\infty} y_k \,:\, y_k\in S_k \text{ for every } k\geqslant m \Big\}
\end{equation}
contains the whole rectangular box $\sum_{k=m}^{\infty} s_k + R_m$.
Indeed, for any $x$ from this box one repeatedly uses \eqref{eq:inductivesubset} to inductively construct a sequence of points $y_k\in S_k$ for $k=m,m+1,m+2,\ldots$ such that
\[ x \in \sum_{k=m}^{n-1} y_k + \sum_{k=n}^{\infty} s_k + R_n \]
for every integer $n\geqslant m$. Because the diameters of $R_n$ converge to $0$, we can pass to the limit as $n\to\infty$ and obtain
\[ x = \sum_{k=m}^{\infty} y_k, \]
which is an element of \eqref{eq:hdtail}.
As a consequence, the set of full series' sums \eqref{eq:fullsersums} also contains a non-degenerate rectangular box and thus has a non-empty interior.
\end{proof}


\section{Proof of Theorem \ref{thm:Stolarsky} on Stolarsky's problem}
\label{sec:proofStolarsky}

Here we assume that $(t_i)_{i=1}^{\infty}$ enumerates the set of rational numbers and satisfies \eqref{eq:growthcond}, so that Lemmas \ref{lm:infDest} and \ref{lm:quantest} can be used again.
Recall the transformation $U$ from the previous section; it clearly satisfies
\[ U \mathbb{Q}^{\mathbb{N}} = \mathbb{Q}^{\mathbb{N}} . \]
Also recall the sequence of functions $f_1,f_2,\ldots$ defined there. Because of \eqref{eq:howUmaps}, Theorem \ref{thm:Stolarsky} will follow from the fact that there exists a strictly increasing sequence of positive integers $(a_n)_{n=1}^{\infty}$ such that $\sum_n 1/a_n < \infty$ and
\[ \Bigl( \sum_{n=1}^{\infty} f_i(a_n) \Bigr)_{i=1}^{\infty} \in \mathbb{Q}^{\mathbb{N}}. \]
Lemma \ref{lm:quantest} will, once again, be the main ingredient in the proof.

\begin{proof}[Proof of Theorem \ref{thm:Stolarsky}]
The sequence $(N_k)_{k=1}^{\infty}$ is now defined as
\[ N_k := \begin{cases}
(2k-1)! & \text{for } 1\leqslant k\leqslant k_0, \\
\big\lfloor 2^{2^{\sqrt{k}}} \big\rfloor & \text{for } k>k_0,
\end{cases} \]
with $k_0\in\mathbb{N}$ chosen sufficiently large that
\begin{equation}\label{eq:iwhatisn1}
N_{k+1} \geqslant (2k+1) N_k
\end{equation}
for every $k\in\mathbb{N}$.
This time we have
\begin{equation}\label{eq:whattnn}
\lim_{k\to\infty} \frac{N_{k+1}}{N_k^{\theta}} = 0
\end{equation}
for any given $\theta\in(1,\infty)$, arbitrarily close to $1$.
Let $(M_k)_{k=1}^{\infty}$ be given by
\[ M_k := \min\Bigl\{ \Big\lfloor\frac{N_k}{4k}\Big\rfloor , \big\lfloor N_k^{1/2}\big\rfloor \Bigr\}, \]
so that \eqref{eq:whatism9} holds again.

First, construct a sequence of positive integers
\[ m(1) < m(2) < m(3) < \ldots \]
inductively as follows.
For each $d\in\mathbb{N}$ let $m(d)$ be the least positive integer larger than $m(d-1)$ such that \eqref{eq:hdmaincond} is valid for every $1\leqslant i\leqslant d$ and $k\geqslant m(d)$. (Here we set $m(0):=0$.) Such a number exists because of \eqref{eq:whattnn} by the same argument given in the proof of Theorem \ref{thm:higherD}.
Also, for any integer $k\geqslant m(1)$ let $d(k)$ be the unique positive integer such that
\[ m(d(k))\leqslant k<m(d(k)+1). \]
Since $m(i)\geqslant i$, we also have
\begin{equation}\label{eq:lessthani}
d(k) \leqslant m(d(k))\leqslant k.
\end{equation}
Finally, denote
\[ \delta_{i,k} := \frac{\varepsilon_{d(k)} M_k}{N_k^{i+1}} \]
for all positive integers $i$ and $k$ such that $i\leqslant d(k)$.

We describe an algorithm for generating both a sequence of rational numbers $(x_i)_{i=1}^{\infty}$ and a strictly increasing sequence of positive integers $(a_n)_{n=1}^{\infty}$ such that
\begin{equation}\label{eq:Stolprop}
\sum_{n=1}^{\infty} f_i(a_n)=x_i
\end{equation}
for every $i\in\mathbb{N}$.
Denote
\[ n(k) := \sum_{l=m(1)}^{k-1} d(l) \]
for every integer $k\geqslant m(1)$, noting that $n(m(1))=0$.
The sequence $(a_n)$ will, more precisely, be such that
\begin{equation}\label{eq:infinduction}
x_i \in \sum_{l=m(1)}^{k-1} \sum_{j=1}^{d(l)} f_i(a_{n(l)+j}) + \sum_{l=k}^{\infty} \sum_{j=1}^{d(l)} f_i(j N_l) + \big[-\delta_{i,k},\delta_{i,k}\big]
\end{equation}
holds for every $i\in\mathbb{N}$ and every integer $k\geqslant m(i)$.
Afterwards, \eqref{eq:Stolprop} will be obtained simply by letting $k\to\infty$ for each fixed $i\in\mathbb{N}$ and observing $\lim_{k\to\infty}\delta_{i,k}=0$.

The steps of the algorithm are indexed by the integers $k\geqslant m(1)$.
Before the $k$-th step, the terms $(a_n)_{n=1}^{n(k)}$ are already defined in a way that \eqref{eq:infinduction} holds for every $i\in\mathbb{N}$ such that $m(i)<k$.
If there exist (a unique) $i\in\mathbb{N}$ such that $m(i)=k$, then we pick a rational number $x_i$ from the interval
\[ \sum_{l=m(1)}^{m(i)-1} \sum_{j=1}^{d(l)} f_i(a_{n(l)+j}) + \sum_{l=m(i)}^{\infty} \sum_{j=1}^{d(l)} f_i(j N_l) + \big[-\delta_{i,m(i)},\delta_{i,m(i)}\big]. \]
Note that this trivially fulfills \eqref{eq:infinduction} also when $m(i)=k$.
Now, Lemma \ref{lm:quantest} applies with $d=d(k)$, $M=M_k$, $N=N_k$ and it gives
\begin{align*}
& \Big( \sum_{j=1}^{d(k)} f_i(j N_k) \Big)_{i=1}^{d(k)} + \prod_{i=1}^{d(k)} \big[-\delta_{i,k},\delta_{i,k}\big] \\
& \subseteq \Bigl\{ \Big( \sum_{j=1}^{d(k)} f_i(j N_k + n_j) \Big)_{i=1}^{d(k)} \,:\, n_1,\ldots,n_{d(k)}\in[-M_k,M_k]\cap\mathbb{Z} \Bigr\} + \prod_{i=1}^{d(k)} \big[-\delta_{i,k+1},\delta_{i,k+1}\big],
\end{align*}
since we know that
\[ \frac{D_{d(k)}}{N_{k}^{i+1}} + \frac{D_{d(k)}M_k^2}{N_{k}^{i+1}} \leqslant \delta_{i,k+1} \]
holds for every $1\leqslant i\leqslant d(k)$ by the definitions of $d(k)$ and $\delta_{i,k}$.
In the $k$-th step of the algorithm, we combine this with \eqref{eq:infinduction} to find integers $n_1,\ldots,n_{d(k)}\in[-M_k,M_k]$ such that
\[ x_i \in \sum_{l=m(1)}^{k-1} \sum_{j=1}^{d(l)} f_i(a_{n(l)+j}) + \sum_{j=1}^{d(k)} f_i(j N_k + n_j) + \sum_{l=k+1}^{\infty} \sum_{j=1}^{d(l)} f_i(j N_l) + \big[-\delta_{i,k+1},\delta_{i,k+1}\big] \]
whenever $i\leqslant d(k)$, i.e., $m(i)\leqslant k$.
It remains to append $(a_n)_{n=1}^{n(k)}$ with $d(k)$ new terms,
\[ a_{n(k)+j} := j N_k + n_j, \quad 1\leqslant j\leqslant d(k), \]
and observe that the previous display can be rewritten as
\[ x_i \in \sum_{l=m(1)}^{k} \sum_{j=1}^{d(l)} f_i(a_{n(l)+j}) + \sum_{l=k+1}^{\infty} \sum_{j=1}^{d(l)} f_i(j N_l) + \big[-\delta_{i,k+1},\delta_{i,k+1}\big] \]
and that it is valid when $m(i)<k+1$.
This is precisely \eqref{eq:infinduction} with $k$ replaced by $k+1$, which finalizes the $k$-th step of our algorithm.

Each sequence $(a_n)$ constructed in this way is positive and strictly increasing because of \eqref{eq:iwhatisn1} and \eqref{eq:lessthani}.
\end{proof}


\section*{Acknowledgments}
The authors are grateful to Thomas Bloom for founding and managing the website \cite{EP}, and for discussing several of the problems considered in this paper.
V.K. would also like to thank Ilijas Farah for a useful discussion of the existing literature.
V.K. is supported in part by the Croatian Science Foundation under the project HRZZ-IP-2022-10-5116 (FANAP). T.T. is supported by NSF grant DMS-2347850.


\bibliography{AhmesSeries}
\bibliographystyle{plainurl}

\end{document}